\newcommand{\Chow}{\mathrm{CH}}
\newcommand{\Chowbullet}{\mathrm{CH}^{\bullet}}
\title[Quasi-symmetric functions and the stack of expanded pairs]{Quasi-symmetric functions and the Chow ring of the stack of expanded pairs}
\author{Jakob Oesinghaus}
\begin{document}
\begin{abstract}
We show that the Hopf algebra of quasi-symmetric functions arises naturally as the integral Chow ring of the algebraic stack of expanded pairs originally described by J. Li, using a more combinatorial description in terms of configurations of line bundles. In particular, we exhibit a gluing map which gives rise to the comultiplication. We then apply the result to calculate the Chow rings of certain stacks of semistable curves.
\end{abstract}
\maketitle
\tableofcontents

\section{Introduction}
In order to study relative Gromov-Witten invariants of a pair $(X,D)$ of a
scheme $X$ over $\bC$ and a Cartier divisor $D \subset \bC$, J. Li
introduced the notion of a family of \emph{expansions} of the pair $(X,D)$
(cf. \cite{LiStableMorphisms, LiDegeneration}).
An expansion of the pair $(X,D)$ of length $\ell$ is constructed by 
gluing $\ell$ copies of the projectivized normal bundle of $D$ to $X$:
\[
X(\ell) := X \sqcup_D \underbrace{ P \sqcup_D \cdots \sqcup_D P}_{\ell\text{ times}},
\]
where $P = \bP(\cO_X (D)\vert_D \oplus \cO_D)$. Since $P$ carries a $\bG_m$-action
by scaling, an expansion of length $\ell$ comes with an action of $\bG_m^\ell$.

Let $\cA := [\bA / \bG_m]$ be the stack quotient of the affine line by the
standard action of the multiplicative group; it is the moduli space of pairs
$(\cL, s)$ of a line bundle $\cL$ and a section $s$ of $\cL$. Let 
$\cD := [ 0 / \bG_m] \cong \bG_m$ be the vanishing locus of the universal
section. The pair $(\cA, \cD)$ forms the universal pair of an algebraic stack and a Cartier divisor on it.
In \cite{ExpDegPairs}, the authors used this fact to define a stack $\cT$ of
expansions of any expansions of the universal pair $(\cA, \cD)$,
proved that it is an algebraic stack locally of finite type. The same object
has also been studied in \cite{GraberVakilRelative}.

The stack $\cA$ and its powers $\cA^n \cong [ \bA^n / \bG_m^n]$ are connected 
to logarithmic geometry; in fact, they form an open substack of the stack of logarithmic
structures (\cite{OlssonLogarithmic}). It is possible to adopt the logarithmic
point of view to identify $\sT$ as the stack of aligned log structures
(cf \cite[8]{ExpDegPairs} and \cite{BorneVistoliParabolic}).
This allows a more combinatorial description of $\sT$ as a colimit of the $\cA^n$
by \'etale morphisms.

The Hopf algebra $\QSym$ of quasi-symmetric functions is well-studied object that
arises as a generalization of symmetric functions. As an algebra, it is commutative 
and graded, and it is free%
\footnote{I.e., isomorphic to a polynomial algebra.}
over $\bZ$ with finitely many generators in each degree,
though writing down explicit integral generators is not straightforward 
(cf \cite{HazewinkelExplicit}). The coalgebra structure is not
cocommutative. It is straightforward to see that $\QSym$ arises as a certain projective
limit of polynomial rings in the category of graded algebras.

We prove that $\QSym$ arises as the Chow ring of $\sT$. To be more precise, we
calculate the Chow ring of $\cX \times \sT$ for a smooth algebraic stack $\cX$ of finite
type over the base field, and show that the colimit construction of $\sT$ gives rise
to an isomorphism
\[
\Chowbullet(\cX \times \sT) \cong \Chowbullet(\cX) \otimes \QSym
\]
in Theorem \ref{thm.main}. We then show that there exists an \'etale, but
non-separated morphism $\sT \times \sT \to \sT$ which exhibits $\sT$
as a monoid object and induces the comultiplication of $\QSym$ on the level
of Chow cohomology. It can be shown that 

In section \ref{sn.applications}, we use the fact that $\sT$ has an interpretation
as an open substack of the moduli stack of $3$-pointed semistable curves
$\fM_{0,3}^{ss}$ to calculate the intersection rings of $\fM_{0,3}^{ss}$.
Moreover, the stack $\fM_{0,2}^{ss}$ can be seen as a non-rigid variant of $\sT$,
which allows us to compute its Chow ring, and the action of the natural
involution on this ring, as a corollary of our computations for $\sT$.
%We then apply these
%results to calculate the Chow groups for a certain closed stratum of the stack of
%genus $0$ prestable curves $\fM_0$, which can be combined with results of Fulghesu
%(\cite{FulghesuAtMostThree}) to obtain a description of the Chow group of a
%large open stratum of $\fM_0$.

\subsubsection*{Notation}
Throughout this note, we let $k$ be an algebraically closed field of characteristic $0$.
There is no loss for the reader in assuming $k$ to be $\bC$ throughout. We let $\fC$
be the category of finite ordered sets and order-preserving injections, and we let $\fC_{\leq m}$ 
be the full subcategory consisting of sets with $m$ or less elements.

\subsubsection*{Acknowledgments}
I am indebted to my advisor Andrew Kresch for giving me the initial inspiration for this work
and helping me develop the ideas within. I would like to thank Johannes Schmitt,
Rahul Pandharipande, Dario de Stavola, and Julian Rosen for helpful comments. 
I am supported by Swiss National Science Foundation grant 156010.

\section{Quasisymmetric functions}
Quasisymmetric functions are a generalization of the well-known Hopf algebra of symmetric
functions due to Gessel (\cite{GesselQuasi}). Since we will identify the intersection ring
of the stack of expanded pairs as the ring of quasisymmetric functions, we first take some
time to describe this ring in more detail. All of the material in this section is classical, except
for possibly Proposition \ref{prop.qsym-limit}, which we did not find in the literature. A very
in-depth treatment of quasisymmetric functions can be found in \cite{QuasiSchur,GrinReinerHopf}.

\subsection{Definitions}
Given any totally ordered set $\fI$, we can consider the commutative graded ring
\begin{equation}
\bZ \llbracket \alpha_i \rrbracket_{i \in \fI}
\end{equation}
of formal power series in the variables $\{\alpha_i\}_{i \in \fI}$ and with $\bZ$-coefficients.

\begin{definition}
The algebra of \emph{quasisymmetric functions on the index set $\fI$} is the subring
$\QSym_\fI \subset \bZ \llbracket \alpha_i \rrbracket_{i \in \fI}$
consisting of power series $f$ of bounded degree satisfying the following condition:

For every two increasing sequences $i_1 < \dots < i_\ell$ and $j_1 < \dots < j_\ell$ of
elements of $\fI$ of length $\ell$, and for every $(I_1, \dots, I_\ell) \in \bZ^\ell_{>0}$,
the coefficients of $f$ for the monomials $\alpha_{i_1}^{I_1} \cdots \alpha_{i_\ell}^{I_\ell}$
and $\alpha_{j_1}^{I_1} \cdots \alpha_{j_\ell}^{I_\ell}$ are equal.

It is also possible to define quasisymmetric functions with $R$-coefficients for
any commutative ring $R$ as $\QSym_\fI \otimes R$.
\end{definition}
\begin{notation*}
We will usually consider the index set $\fI = \bZ_{>0}$, and write $\QSym := \QSym_{\bZ_{>0}}$.
We also define $\QSym_n := \QSym_{[n]}$, where $[n] := \{1 < \dots < n \}$.
\end{notation*}

\begin{definition}
Let $n\in \bN$. A \emph{composition} $I$ of $n$ (of length $\ell(I) := \ell$) is an ordered $\ell$-tuple 
$(I_1, \dots, I_\ell) \in \bZ_{>0}$ of positive integers such that $I_1 + \dots + I_\ell = n$.
Let $\Comp$ be the set of all compositions, and let $\Comp_n$ be the set of compositions of $n$.
Given two compositions $I=(I_1,\dots,I_k)$ and $J=(J_1,\dots, J_\ell)$, we write
\[
I \cdot J := (I_1, \dots, I_k, J_1, \dots, J_\ell).
\]
\end{definition}

There is a natural basis of $\QSym_{\fI}$ called the \emph{monomial basis}, indexed by compositions.
For a composition $I$, let $M_I$ be the monomial
\begin{equation}
M_I := \sum_{i_1 < \dots <i_\ell} \alpha_{i_1}^{I_1} \cdots \alpha_{i_\ell}^{I_\ell}.
\end{equation}
Then the monomials $M_I$ form a basis of $\QSym_\fI$ if $\fI$ is infinite, and the monomials indexed
by compositions $I$ with $\ell(I) \leq \vert \fI \vert$ form a basis of $\QSym_\fI$ if $\fI$ is finite%
\footnote{All $M_I$ for $\ell(I) > \vert \fI \vert$ are identically $0$.}. In particular, this implies that
$\QSym_\fI$ does not depend on the index set as long as $\fI$ is infinite. Nonetheless, keeping
track of indices can often be useful.

It is clear that $\deg(M_I) = n$ if $I$ is a composition of $n$, hence compositions of $n$ give rise to
a basis for the degree $n$ part of $\QSym$. The assignment $\fI \mapsto \QSym_\fI$ defines a
contravariant functor $\mathfrak{QSym}$ from the category of totally ordered sets and order-preserving injections to
the category of commutative graded rings; a morphism $g: \fI \incl \fJ$ maps to the "evaluation" homomorphism
setting all $\alpha_j$ for $j \notin g(\fI)$ to $0$.
This functor behaves nicely in the monomial basis:
\begin{proposition}
\label{prop.qsym.restriction}
For any order-preserving injection $g: \fI \incl \fJ$ and any composition $I$, we have
\[
\mathfrak{QSym}(g)(M_I) = M_I.
\]
In particular, if $\fI$ is finite, then the kernel of $\mathfrak{QSym}(g)$ is generated by monomials
$M_I$ for compositions $I$ satisfying $\ell(I) > \vert \fI \vert$, and the restriction of
$\mathfrak{QSym}(g)$ to the subalgebra generated by monomials $M_I$ for $\ell(I) \leq \vert \fI \vert$
is an isomorphism.\qed
\end{proposition}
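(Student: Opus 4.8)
The plan is to analyze the map $\mathfrak{QSym}(g)$ directly in the monomial basis. Recall that $\mathfrak{QSym}(g)$ is the ring homomorphism that substitutes $\alpha_j \mapsto 0$ for all $j \notin g(\fI)$ and $\alpha_{g(i)} \mapsto \alpha_{g(i)}$ otherwise. The core computation is to apply this substitution to the defining sum $M_I = \sum_{i_1 < \dots < i_\ell} \alpha_{i_1}^{I_1} \cdots \alpha_{i_\ell}^{I_\ell}$, where the sum ranges over increasing sequences in $\fJ$. After substitution, every monomial involving an index outside $g(\fI)$ vanishes, so the surviving terms are exactly those indexed by increasing sequences lying entirely in the image $g(\fI)$. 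First I would observe that since $g$ is an order-preserving injection, it induces an order-isomorphism from $\fI$ onto its image $g(\fI) \subseteq \fJ$. Hence increasing sequences of length $\ell$ in $g(\fI)$ correspond bijectively and order-compatibly to increasing sequences of length $\ell$ in $\fI$, and the surviving sum is precisely the expression for $M_I$ computed in $\QSym_\fI$. This yields $\mathfrak{QSym}(g)(M_I) = M_I$.

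For the second assertion, I would leverage the fact (stated in the excerpt) that when $\fJ$ is finite the monomials $M_I$ with $\ell(I) \leq |\fJ|$ form a basis, while those with $\ell(I) > |\fJ|$ vanish identically. Applying the first part, the homomorphism $\mathfrak{QSym}(g)$ sends each basis element $M_I$ of $\QSym_\fJ$ either to the corresponding basis element $M_I$ of $\QSym_\fI$ (when $\ell(I) \leq |\fI|$) or to $0$ (when $\ell(I) > |\fI|$, since then $M_I$ is identically zero in $\QSym_\fI$). Here I use $|\fI| \leq |\fJ|$, which holds because $g$ is injective. Since the various $M_I$ form part of a basis, this describes the linear map completely: it is a split surjection onto $\QSym_\fI$ whose kernel is the span of those $M_I$ with $|\fI| < \ell(I) \leq |\fJ|$.

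It remains to upgrade this linear-algebraic description of the kernel to the claimed \emph{ideal}-theoretic statement. I would argue that the monomials $M_I$ with $\ell(I) > |\fI|$ generate the kernel as an ideal: any element $f \in \ker \mathfrak{QSym}(g)$ is, by the basis description, a $\bZ$-linear combination of such $M_I$, hence certainly lies in the ideal they generate; conversely that ideal is contained in the kernel because $\mathfrak{QSym}(g)$ is a ring homomorphism killing each generator. The restriction to the subalgebra generated by the $M_I$ with $\ell(I) \leq |\fI|$ is then an isomorphism onto $\QSym_\fI$: it is surjective since these $M_I$ hit every basis element of $\QSym_\fI$, and injective because the kernel of $\mathfrak{QSym}(g)$ meets this subalgebra trivially.

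**The main obstacle** is a subtlety I would want to state carefully rather than compute: whether the products of the "small" monomials $M_I$ (those with $\ell(I) \leq |\fI|$) can inadvertently produce "large" monomials $M_J$ with $\ell(J) > |\fI|$, which would break the clean direct-sum splitting of $\QSym_\fJ$ into kernel and image. Because $\QSym$ is \emph{not} freely generated by the $M_I$ — the product $M_I M_J$ expands via the quasi-shuffle (overlapping shuffle) rule into a $\bZ$-combination of monomials $M_K$ whose lengths range from $\max(\ell(I),\ell(J))$ up to $\ell(I)+\ell(J)$ — the subalgebra generated by small monomials genuinely contains large monomials. Consequently the "subalgebra generated by monomials $M_I$ for $\ell(I) \leq |\fI|$" need not be the span of those monomials, and one must check that $\mathfrak{QSym}(g)$ restricted to it is still injective. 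I expect to resolve this by noting that $\mathfrak{QSym}(g)$ is a ring homomorphism agreeing with the \emph{identity} on the generators $M_I$ (first part), so its restriction to the generated subalgebra is determined by the identity on generators; composing with the inclusion back into $\QSym_\fJ$ shows the restriction is a section of a surjection and hence injective.
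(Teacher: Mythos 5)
Your computation of $\mathfrak{QSym}(g)(M_I)=M_I$ and your identification of the kernel as the $\bZ$-span of the monomials $M_I$ with $\ell(I)>\vert\fI\vert$ are correct, and this is essentially all there is to say (the paper states the proposition with no proof at all). The genuine gap is in your handling of the obstacle you yourself flagged at the end. Your proposed fix --- that $\mathfrak{QSym}(g)$ ``agrees with the identity on the generators,'' so that composing with ``the inclusion back into $\QSym_\fJ$'' exhibits the restriction as a section of a surjection --- does not work, because there is no algebra homomorphism $\QSym_\fI\to\QSym_\fJ$ sending $M_I\mapsto M_I$; the paper itself points this out immediately after the proposition (the monomial-preserving maps in the covariant direction are only $\bZ$-module maps, since $M_{(1)}\cdot M_{(1)}=M_{(2)}$ in $\QSym_1$ but equals $M_{(2)}+2M_{(1,1)}$ in $\QSym_2$). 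Worse, the conclusion you are trying to rescue is actually false under the literal reading of ``subalgebra generated'': take $\fI=\{1\}\incl\fJ=\{1<2\}$. Then $M_{(1)}^2-M_{(2)}=2M_{(1,1)}$ is a nonzero element of the subalgebra of $\QSym_\fJ$ generated by the monomials of length $\leq 1$, yet it lies in the kernel of $\mathfrak{QSym}(g)$. So the restriction of $\mathfrak{QSym}(g)$ to that subalgebra is not injective, and no argument can make it so.

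The statement survives, and is what the paper actually uses later (it is invoked only for ``additive computations''), if ``subalgebra generated by'' is read as ``$\bZ$-submodule spanned by'': the monomials $M_I$ with $\ell(I)\leq\vert\fI\vert$ are linearly independent in $\QSym_\fJ$ and are carried bijectively onto the monomial basis of $\QSym_\fI$, so the restriction of $\mathfrak{QSym}(g)$ to their span is an isomorphism of graded $\bZ$-modules, and $\QSym_\fJ$ splits additively as this span plus the kernel. That is exactly the content of the first half of your second paragraph; you should stop there, record the splitting as one of $\bZ$-modules, and note that multiplicative closure of the complement is precisely what fails, rather than attempting to restore it.
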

Note that there also exists a covariant functor from the category of totally ordered sets
and order-preserving injections to the category of graded $\bZ$-modules, sending $M_I \to M_I$;
however, these inclusions of $\bZ$-modules are not algebra homomorphisms. For example,
for the inclusion $\{1\} \incl \{1 < 2\}$, the product $M_{(1)} \cdot M_{(1)}$ is equal to
$M_{(2)}$ in the source, but it is $M_{(2)} + 2 M_{(1,1)}$ in the target.

\begin{example*}
Let us list all monomials of degree up to $3$, with index set $\bZ_{>0}$.
\begin{itemize}
\item $M_\emptyset = 1$.
\item $M_{(1)} = \sum_i x_i = x_1 + x_2 + \dots$.
\item $M_{(2)} = \sum_i x_i^2 = x_1^2 + x_2^2 + \dots$.
\item $M_{(1,1)} = \sum_{i < j} x_i x_j = x_1 x_2 + x_1 x_3 + x_2 x_3 + \dots$.
\item $M_{(3)} = \sum_i x_i^3 = x_1^3 + x_2^3 + \dots$.
\item $M_{(2,1)} = \sum_{i < j} x_i^2 x_j = x_1^2 x_2 + x_1^2 x_3 + x_2^2 x_3 + \dots$.
\item $M_{(1,2)} = \sum_{i < j} x_i x_j^2 = x_1 x_2^2 + x_1 x_3^2 + x_2 x_3^2 + \dots$.
\item $M_{(1,1,1)} = \sum_{i < j < k} x_i x_j x_k = x_1 x_2 x_3 + x_1 x_2 x_4 + x_1 x_3 x_4 + x_2 x_3 x_4 + \dots$.
\end{itemize}
\end{example*}

\subsection{Free generators for the multiplication}
There is a description of the multiplication of monomials in purely combinatorial terms.

\begin{proposition}[{\cite[Prop 5.3]{GrinReinerHopf}}]
\label{prop.qsym.multiplication}
Fix an infinite index set $\fI$, and $\ell, m \in \bZ_{> 0}$. Fix two pairwise disjoint chain
posets $\{i_1 < \dots < i_\ell\}$ and $\{j_1 < \dots < j_m\}$. 

For compositions $I= (I_1, \dots, I_\ell)$ and
$J = (J_1, \dots, J_m)$, we have
\[
M_I \cdot M_J = \sum_f M_{\wt f},
\]
where the sum runs over all surjective, strictly order-preserving maps
\[
\{i_1,\dots,i_\ell\} \sqcup\{j_1, \dots, j_m\} \to \{1, \dots, n \}
\]
for some $n \in \bN$, and the composition $\wt f$ is defined by
\[
(\wt f)_x := \sum_{i_u \in f^{-1}(x)} I_u + \sum_{j_u \in f^{-1}(x)} J_u.
\]
\end{proposition}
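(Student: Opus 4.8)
The plan is to expand both monomial functions in their defining power-series form and to track how individual monomials combine upon multiplication. Writing the sums over strictly increasing sequences in $\fI$, I would start from
\[
M_I \cdot M_J = \sum_{p_1 < \dots < p_\ell} \; \sum_{q_1 < \dots < q_m} \alpha_{p_1}^{I_1} \cdots \alpha_{p_\ell}^{I_\ell} \cdot \alpha_{q_1}^{J_1} \cdots \alpha_{q_m}^{J_m},
\]
where $(p_u)$ and $(q_v)$ range over all strictly increasing sequences in $\fI$ of lengths $\ell$ and $m$. For a fixed such pair, let $\{c_1 < \dots < c_n\}$ be the union of the two index sets; the corresponding term collapses to the single monomial $\alpha_{c_1}^{e_1} \cdots \alpha_{c_n}^{e_n}$, where the exponent $e_x$ is the sum of all $I_u$ with $p_u = c_x$ together with all $J_v$ with $q_v = c_x$. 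Note in particular that indices from the two sequences are allowed to coincide, in which case both an $I_u$ and a $J_v$ contribute to the same exponent.

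The key step is to organize this double sum by its underlying \emph{merge pattern}. To each pair $\bigl((p_u),(q_v)\bigr)$ I would associate the map $f$ sending $i_u$ to the position of $p_u$ in the union $\{c_1 < \dots < c_n\}$ and $j_v$ to the position of $q_v$. Here "strictly order-preserving" is to be read as strictly increasing on each of the two chains separately, with no constraint imposed between elements of different chains; this is exactly what the incomparability in the disjoint union dictates, and it is forced by $p_1 < \dots < p_\ell$ and $q_1 < \dots < q_m$. Because every $c_x$ arises from some $p_u$ or $q_v$, the map $f$ is surjective, and by construction the exponents $e_x$ coincide with $(\wt f)_x$. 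Conversely, I would check that for a fixed surjective, strictly order-preserving $f$ the pairs giving rise to it are in bijection with the strictly increasing $n$-tuples $c_1 < \dots < c_n$ in $\fI$, via $p_u = c_{f(i_u)}$ and $q_v = c_{f(j_v)}$; surjectivity guarantees the union is all of $\{c_1,\dots,c_n\}$, and strict monotonicity on each chain guarantees that $(p_u)$ and $(q_v)$ are again strictly increasing.

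With the bijection in hand, I would group the terms of the double sum according to their merge pattern $f$ and sum within each group. For a fixed $f$ the grouped terms are precisely $\sum_{c_1 < \dots < c_n} \alpha_{c_1}^{(\wt f)_1} \cdots \alpha_{c_n}^{(\wt f)_n}$, which is by definition $M_{\wt f}$. At this point one observes that surjectivity of $f$ forces every part $(\wt f)_x$ to be strictly positive, since each fiber receives at least one $I_u$ or $J_v$; thus $\wt f$ is a genuine composition and $M_{\wt f}$ a well-defined monomial function. Summing over all merge patterns $f$ then yields $M_I \cdot M_J = \sum_f M_{\wt f}$.

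I do not expect a serious obstacle: the content is purely combinatorial bookkeeping, and the only point requiring care is to state the correspondence between pairs of increasing sequences and surjective, strictly order-preserving maps precisely enough that both its well-definedness and its inverse are manifest. The degree identity $\sum_x (\wt f)_x = \sum_u I_u + \sum_v J_v$ provides a convenient consistency check throughout.
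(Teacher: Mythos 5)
The paper does not prove this proposition itself but cites it from Grinberg--Reiner (Prop.\ 5.3); your argument is correct and is essentially the standard proof given there, namely expanding both monomial functions, grouping the terms of the double sum by the overlapping-shuffle merge pattern $f$, and checking the bijection between pairs of increasing sequences and pairs $(f, \{c_1 < \dots < c_n\})$. No gaps: the identification of ``strictly order-preserving on the disjoint union'' with ``strictly increasing on each chain separately,'' and the observation that surjectivity makes every part of $\wt f$ positive, are exactly the points that need care, and you handle both.
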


As a consequence of Proposition \ref{prop.qsym.restriction} and Proposition \ref{prop.qsym.multiplication},
we can perform  additive computations involving only monomials of length $\leq \ell$ using any index set
with $\ell$ elements, and we can compute the product of two monomials of degree $\ell$ and $m$ using 
any index set with $\ell + m$ elements.

\begin{example*}
We can compute the product $M_{(1,2)} \cdot M_{(1,1)}$ as follows, using Proposition \ref{prop.qsym.multiplication}.
\begin{align*}
M_{(1,2)} \cdot M_{(1,1)} = &  M_{(1,2,1,1)}  + 2M_{(1,1,2,1)} + 3M_{(1,1,1,2)}  \\
 +  &   M_{(2,2,1)} + M_{(1,3,1)} + M_{(2,1,2)} + 2M_{(1,1,3)} + M_{(1,2,2)} \\
  + &   M_{(2,3)}.
\end{align*}
\end{example*}
It can be shown that quasisymmetric functions form a polynomial algebra, i.e., there is a subset generating $\QSym$ as
an algebra, without any relations. To write down a more precise statement, we need to introduce some notation.

\begin{definition}
Let $I,J\in \Comp$. We say that $I \leq J$ if
\begin{itemize}
\item either there exists an $i \leq \min\{\ell(I), \ell(J)\}$ such that $I_i < J_i$ and for every $j<i$, we have $I_j = J_j$,
\item or $I$ is a prefix of $J$, i.e. we can write $J=I\cdot K$ for another composition $K$.
\end{itemize}
This defines a total order on the set of compositions, sometimes called the \emph{lexicographic order},
because it is equal to the lexicographic order when two compositions have the same length.
\end{definition}
\begin{definition}
We say that a composition $I$ is a \emph{Lyndon composition} if every nonempty proper
suffix of $I$ is greater than $I$.
Concretely, this means that whenever we can write $I=J \cdot K$ for $J$ nonempty
compositions $J$ and $K$, we have $K>I$. Let $\fL$ be the set of Lyndon compositions.
\end{definition}

\begin{example*}
The empty composition is not Lyndon. Every composition of the form $(a)$ is Lyndon.
A composition of the form $(a,b)$ is Lyndon if and only if $b > a$.
A composition of the form $(a,b,c)$ is Lyndon if and only if $c > a$ and $b \geq a$.
\end{example*}

Let $\mu$ be the number-theoretic Möbius function, i.e. $\mu(d)$ is the sum of the $d$-th
primitive roots of unity.

\begin{proposition}[{\cite{TreueDarstellung}}]
The number $b_n$ of Lyndon compositions of length $n$ equals
\[
	b_n := \frac{1}{n} \sum_{d \vert n} \mu(d)\left( 2^{n/d} - 1 \right).
\]
Hence, the numbers $b_n$ satisfy $\sum_{d \vert n} db_d = 2^n -1$.
\end{proposition}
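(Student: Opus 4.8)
The plan is to reduce the statement to the classical enumeration of binary Lyndon words (the necklace, or Witt, formula), which is presumably the content of the cited reference. Note first that the two displayed assertions are equivalent. Writing $g(m) := 2^m - 1$, the claimed formula reads $n b_n = \sum_{d \mid n}\mu(n/d)\, g(d)$ after the reindexing $d \mapsto n/d$, and this is precisely the Möbius inversion of $g(n) = \sum_{d \mid n} d\, b_d$. So it suffices to compute $b_n$ and then invert to obtain $\sum_{d \mid n} d\, b_d = 2^n - 1$. I will compute $b_n$ directly by exhibiting a bijection between Lyndon compositions and Lyndon words; throughout I read a Lyndon composition of length $n$ as a Lyndon composition whose parts sum to $n$.

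First I would encode compositions as binary words. Send a composition $I = (I_1,\dots,I_\ell)$ of $n$ to the word
\[
w(I) := 1\,0^{I_1 - 1}\,1\,0^{I_2-1}\cdots 1\,0^{I_\ell-1}
\]
of length $I_1 + \dots + I_\ell = n$ over the alphabet $\{0,1\}$. This is a bijection between compositions of $n$ and binary words of length $n$ beginning with $1$, turning the sum of the parts into the length of the word. Order the alphabet by $1 < 0$ and order words lexicographically with the convention that a proper prefix is smaller (the same convention used for compositions). A short induction on $\ell$ shows that $w$ is order-preserving: at the first part where two compositions differ, the larger part produces a longer initial run of the large letter $0$, and the composition-prefix case matches the word-prefix case exactly. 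Moreover, the proper suffixes of $w(I)$ that begin with $1$ are precisely the words $w(I')$ for the proper nonempty composition-suffixes $I'$ of $I$, while every proper suffix of $w(I)$ beginning with $0$ is automatically larger than $w(I)$, which begins with the least letter $1$. Combining these two observations, $w(I)$ is strictly smaller than all its proper suffixes if and only if $I < I'$ for every proper suffix $I'$; that is, $I$ is a Lyndon composition if and only if $w(I)$ is a Lyndon word.

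It then remains to count binary Lyndon words of length $n$ beginning with $1$. The total number of Lyndon words of length $n$ over a two-letter alphabet is the classical count $L(n) = \frac1n\sum_{d\mid n}\mu(d)2^{n/d}$, independent of the chosen order on the alphabet. Since $1$ is the least letter, every Lyndon word of length $\ge 2$ begins with $1$ (a word of length $\ge 2$ beginning with $0$ is either constant, hence not Lyndon, or has a strictly smaller rotation beginning with $1$), so the only Lyndon word not beginning with $1$ is the single one-letter word $0$. Hence $b_n = L(n)$ for $n \ge 2$ and $b_1 = L(1) - 1 = 1$, uniformly $b_n = L(n) - [n=1]$. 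Using $\sum_{d\mid n}\mu(d) = [n=1]$ gives
\[
b_n = \frac1n\sum_{d\mid n}\mu(d)2^{n/d} - \frac1n\sum_{d\mid n}\mu(d) = \frac1n\sum_{d\mid n}\mu(d)\bigl(2^{n/d}-1\bigr),
\]
and the final identity $\sum_{d\mid n} d\, b_d = 2^n-1$ follows by the Möbius inversion noted above. The main thing to get right is the compatibility in the middle paragraph — that $w$ is simultaneously order-preserving and matches composition-suffixes with the $1$-initial word-suffixes — since everything else is the off-the-shelf Lyndon-word count plus the bookkeeping for the single excluded letter that produces the $-1$. (Alternatively, one could avoid the word encoding and prove the generating-function identity $\prod_{n\ge 1}(1-t^n)^{-b_n} = (1-t)/(1-2t)$, then log-differentiate and Möbius-invert; the combinatorial bijection above seems cleaner and is the approach I would write up.)
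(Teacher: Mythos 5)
Your proof is correct. Note that the paper itself offers no argument for this proposition --- it is stated as a citation to Witt's classical paper \cite{TreueDarstellung}, which is precisely the necklace/Lyndon-word count $L(n)=\frac1n\sum_{d\mid n}\mu(d)q^{n/d}$ that you invoke. What you supply is the missing reduction: the standard encoding $I\mapsto 1\,0^{I_1-1}\cdots 1\,0^{I_\ell-1}$ of compositions of $n$ as binary words of length $n$ beginning with $1$, checked to be an order isomorphism (for the alphabet order $1<0$ and the prefix-is-smaller convention, matching the paper's order on compositions) and to carry composition-suffixes to the $1$-initial word-suffixes, so that Lyndon compositions correspond to Lyndon words; the bookkeeping $b_n=L(n)-[n=1]$ then yields the stated formula with the $-1$ inside the sum, and the identity $\sum_{d\mid n}db_d=2^n-1$ is Möbius inversion. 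This is the standard argument found in the quasi-symmetric function literature (e.g.\ Hazewinkel, Grinberg--Reiner), and all the delicate points --- order-preservation of the encoding across compositions of different weights, the suffix characterization of Lyndon words, and the exclusion of the single one-letter word $0$ --- are handled correctly. You also rightly read ``Lyndon compositions of length $n$'' in the statement as ``of weight $n$'' (parts summing to $n$), which is what the paper must mean given the grading of $\QSym$ and the sequence A059966; as written the statement contains a small terminological slip.
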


\begin{remark*}
The sequence $b_n$ starts as follows (sequence A059966 in OEIS):
\[
(b_1, b_2, \dots) = (1,1,2,3,6,9,18,\dots).
\]
\end{remark*}
\begin{theorem}[{\cite{HazewinkelInitial, HazewinkelExplicit}}]
$\QSym$ is isomorphic to a graded polynomial ring with $b_n$ generators in degree $n$.
\end{theorem}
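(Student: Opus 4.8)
The plan is to realize $\QSym$ as a polynomial ring by producing a free generating set indexed by Lyndon compositions, splitting the argument into the generator count, freeness after inverting the primes, and the genuinely hard integral statement. For the count, recall that a free graded polynomial ring with $c_n$ generators in degree $n$ has Hilbert series $\prod_{n\geq 1}(1-t^n)^{-c_n}$, whereas Proposition~\ref{prop.qsym.restriction} shows the $M_I$ with $I \in \Comp_n$ form a basis of the degree-$n$ part, so the Hilbert series of $\QSym$ is $\sum_{n\geq 0}|\Comp_n|\,t^n = 1+\sum_{n\geq1}2^{n-1}t^n = (1-t)/(1-2t)$. Taking the logarithmic derivative $t\,\tfrac{d}{dt}\log$ of the equation $\prod_{n\geq1}(1-t^n)^{-c_n}=(1-t)/(1-2t)$ yields $\sum_{m\geq1}\big(\sum_{n\mid m} n c_n\big)t^m = \sum_{m\geq1}(2^m-1)t^m$, i.e. $\sum_{n\mid m}n c_n = 2^m-1$. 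This is exactly the relation $\sum_{d\mid n}d\,b_d = 2^n-1$ recorded above, so Möbius inversion forces $c_n=b_n$; it remains only to produce an actual free generating set, which the Chen--Fox--Lyndon theorem suggests indexing by $\fL$.

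For freeness after inverting the primes, order $\Comp$ by length and then lexicographically, so that in each degree the longest compositions are the largest. By the Chen--Fox--Lyndon theorem every $I\in\Comp$ has a unique factorization $I=L_1\cdot L_2\cdots L_r$ with $L_1\geq\dots\geq L_r$ in $\fL$. Expanding the product $M_{L_1}\cdots M_{L_r}$ in the monomial basis via Proposition~\ref{prop.qsym.multiplication}, the terms $M_{\wt f}$ of maximal length come only from the non-merging shuffles, since any merging strictly decreases the length; among these the nonincreasing arrangement of the $L_i$ makes $M_I$ the lexicographically leading one, and a short computation gives its coefficient as $\prod_L (m_L!)$, where $m_L$ is the multiplicity of $L$ among the factors. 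Since this is a nonzero integer, the products $M_{L_1}\cdots M_{L_r}$ are related to the basis $\{M_I\}$ by a triangular matrix invertible over $\mathbb{Q}$, so $\QSym\otimes\mathbb{Q}$ is freely generated by $\{M_L : L\in\fL\}$, reconfirming $c_n=b_n$.

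The crux — and the reason the theorem is attributed to Hazewinkel — is that these diagonal coefficients $\prod_L m_L!$ fail to be units whenever a Lyndon factor repeats, so the $M_L$ do not generate $\QSym$ \emph{freely over} $\bZ$: already $M_{(1,1)}=\tfrac12\big(M_{(1)}^2-M_{(2)}\big)$, so $M_{(1)}$ and $M_{(2)}$ span only an index-$2$ sublattice of $\QSym_2$. One must therefore replace each $M_L$ by a corrected generator $p_L = M_L + (\text{lower-order terms})$, still indexed by $\fL$, for which the transition matrix to the monomial basis becomes \emph{unimodular}, equivalently for which the monomials in the $p_L$ form a genuine $\bZ$-basis rather than merely a $\mathbb{Q}$-basis. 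Producing such generators and establishing unimodularity is the content of the former Ditters conjecture; this is the hard step, and I would invoke the cited results of Hazewinkel rather than reprove it.
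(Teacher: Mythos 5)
The paper offers no proof of this statement at all: it is quoted from Hazewinkel, and the surrounding text only records the rational version (the map $x_I \mapsto M_I$ for $I \in \fL$ is an isomorphism onto $\QSym\otimes\bQ$) together with a footnote deferring the integral case to the literature. Your proposal is therefore not in conflict with the paper; it simply supplies more of the argument than the paper does. Your generator count via the Hilbert series $\sum_n |\Comp_n|\,t^n = (1-t)/(1-2t)$ and the logarithmic-derivative identity $\sum_{n\mid m} n c_n = 2^m-1$ is correct and matches the relation $\sum_{d\mid n} d\,b_d = 2^n-1$ recorded in the paper (note that the paper's phrase ``Lyndon compositions of length $n$'' should be read as ``Lyndon compositions of $n$'', as you implicitly do). Your rational freeness argument is the standard Radford/Chen--Fox--Lyndon leading-term argument: the merging terms in Proposition \ref{prop.qsym.multiplication} drop the length, the non-merging terms are shuffles, and the decreasing concatenation of the Lyndon factors appears with coefficient $\prod_L m_L!$, giving a triangular transition matrix invertible over $\bQ$. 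Your identification of exactly where integrality fails ($M_{(1,1)}=\tfrac12(M_{(1)}^2-M_{(2)})$, so the diagonal entries $\prod_L m_L!$ are non-units) is also correct, and the genuinely hard step --- producing corrected generators with a unimodular transition matrix, i.e.\ the Ditters conjecture --- is precisely the content of the cited work of Hazewinkel. Deferring that step to the citation is exactly what the paper itself does, so your write-up is a sound and somewhat more informative account of the same result; it is not a complete self-contained proof, but neither the theorem's attribution nor the paper's use of it requires one.
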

We can write down a set of free rational generators quite explicitly. This is also possible over the
integers, but requires a larger notational effort\footnote{To see how an integral basis can be
constructed with the same index set, see \cite{HazewinkelExplicit},
or \cite[6.5]{GrinReinerHopf} for a more detailed explanation.}.
In fact, the morphism
\begin{align*}
	\bQ[x_I]_{I \in \fL} \to & \QSym \otimes \bQ \\
	x_I \mapsto & M_I
\end{align*}
is an isomorphism of graded algebras, for $\deg x_I = \vert I \vert$.

\begin{example*}
The generators corresponding to Lyndon compositions up to degree $4$ are as follows:
\[
M_{(1)}, M_{(2)}, M_{(3)}, M_{(1,2)}, M_{(4)}, M_{(1,3)}, M_{(1,1,2)}.
\]
\end{example*}

\subsection{Hopf algebra structure}
The comultiplication for quasisymmetric functions extend the one for symmetric functions.
In the monomial basis, it can be described as follows. Let
\begin{align*}
	\Delta\colon  \QSym &\to 	 \QSym \otimes \QSym & \varepsilon\colon  \QSym &\longrightarrow 	 \bZ \\
				 M_I &\mapsto	 \sum_{I=J \cdot K} M_J \otimes M_K &  n + \sum_{\vert I \vert \geq 1} M_I  &\mapsto n.
\end{align*}
and
\begin{align*}
	S\colon  \QSym &\to 	 \QSym \\
				 M_I &\mapsto  (-1)^{\ell(I)} \ \sum_{\mathclap{{\substack{J \text{ coarser than} \\ \rev(I)}}}} \  M_J,
\end{align*}
where the reverse $\rev(I)$ of a composition $I = (I_1, \dots, I_\ell)$ is $\rev(I) = (I_\ell, \dots, I_1)$
and $J$ is said to be \emph{coarser} than $K$ if $J$ can be obtained by successively summing
some of the adjacent entries of $K$.
\begin{proposition}
The triple $(\QSym, \Delta, \varepsilon)$ defines a coassociative coalgebra structure on $\QSym$ which
is compatible with the algebra structure. Moreover, the bialgebra $\QSym$ is a graded Hopf
algebra with antipode $S$.
\qed
\end{proposition}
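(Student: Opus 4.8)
The plan is to realize both $\Delta$ and $\varepsilon$ as substitution (evaluation) homomorphisms on the ambient power series ring, which turns the bialgebra compatibility and coassociativity into formal consequences, and then to obtain the antipode from the general theory of graded connected bialgebras. Concretely, fix two disjoint copies $\fI' \prec \fI''$ of the index set $\bZ_{>0}$, ordered so that every element of $\fI'$ precedes every element of $\fI''$, with corresponding variables $\alpha'_i, \alpha''_j$. Substituting these for the $\alpha$'s gives an algebra homomorphism $\bZ\llbracket \alpha_i \rrbracket \to \bZ\llbracket \alpha'_i, \alpha''_j \rrbracket$, and the heart of the argument is to identify its restriction to $\QSym$ with $\Delta$.

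First I would check that this substitution carries $M_I$ to
\[
M_I(\alpha', \alpha'') = \sum_{I = J\cdot K} M_J(\alpha')\, M_K(\alpha''),
\]
the point being that in an increasing monomial every primed variable precedes every doubly-primed one, so the sum over increasing sequences breaks up according to the position of the cut, reproducing exactly the deconcatenations $I = J\cdot K$. Since each graded piece of $\QSym$ has finite rank, this is a finite sum lying in the algebraic tensor product $\QSym\otimes\QSym$ viewed inside $\bZ\llbracket \alpha'_i, \alpha''_j \rrbracket$, so the substitution restricts to a map agreeing with $\Delta$ on the monomial basis. Because the multiplication on $\QSym\otimes\QSym$ is precisely multiplication of power series in the disjoint variable sets, this substitution is an algebra homomorphism; hence so is $\Delta$, and its compatibility with the product of Proposition~\ref{prop.qsym.multiplication} is automatic. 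Likewise $\varepsilon$ is the algebra homomorphism setting every variable to $0$, i.e. projection onto the degree-$0$ part, so it is multiplicative.

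For coassociativity and the counit axioms I would iterate this picture. Both $(\Delta\otimes\mathrm{id})\circ\Delta$ and $(\mathrm{id}\otimes\Delta)\circ\Delta$ are substitution into a threefold ordered alphabet $\fI'\prec\fI''\prec\fI'''$, each sending $M_I$ to $\sum_{I = A\cdot B\cdot C} M_A\otimes M_B\otimes M_C$, so their equality is the associativity of concatenation of compositions. The counit axioms amount to setting one of the two alphabets to $0$, which by the displayed decomposition retains only the term with an empty factor and returns $M_I$. Since $\Delta$ sends $\QSym_n$ into $\bigoplus_{p+q=n}\QSym_p\otimes\QSym_q$ and $\QSym_0 = \bZ$, the resulting bialgebra is graded and connected.

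It remains to treat the antipode. A graded connected bialgebra over a commutative ring is automatically a Hopf algebra, the antipode being the unique graded map determined by $S(1)=1$ and the recursion $S(x) = -\sum S(x')\,x''$ over the reduced coproduct, which terminates by the grading. Thus existence and uniqueness of $S$ are free, and what must be shown is that the closed formula $M_I\mapsto(-1)^{\ell(I)}\sum_{J} M_J$, summed over $J$ coarser than $\rev(I)$, satisfies $\sum_{I=J\cdot K} S(M_J)\,M_K = \varepsilon(M_I)\,1$. I expect this verification to be the main obstacle: after expanding $S$ and multiplying out via Proposition~\ref{prop.qsym.multiplication}, one is left with an alternating sum indexed by pairs of a deconcatenation and a coarsening, which I would collapse by a sign-reversing involution on the refinement order of compositions (equivalently, a Möbius-inversion argument), leaving only the empty composition when $I=\emptyset$. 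Matching this against the recursively defined antipode, or simply invoking its uniqueness, then identifies $S$ and completes the proof.
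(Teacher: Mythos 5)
Your proposal is correct and follows the standard classical route; the paper itself offers no proof of this proposition, marking it \qed\ and deferring to the literature it cites (Gessel, Grinberg--Reiner), so there is no in-paper argument to compare against. Your key device --- realizing $\Delta$ as substitution into a doubled alphabet $\fI' \prec \fI''$, so that the deconcatenation formula $M_I \mapsto \sum_{I=J\cdot K} M_J \otimes M_K$, the algebra-homomorphism property, coassociativity (via a tripled alphabet), and the counit axioms all become formal --- is exactly the textbook argument, and it is carried out correctly; in particular you are right that the splitting of an increasing index sequence at the boundary between $\fI'$ and $\fI''$ is what produces the sum over deconcatenations. The one piece you leave as a sketch is the verification that the stated closed formula $S(M_I) = (-1)^{\ell(I)} \sum_{J \text{ coarser than } \rev(I)} M_J$ satisfies the antipode identity. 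Your reduction is sound: existence and uniqueness of the antipode are automatic for a graded connected bialgebra via the recursion on the reduced coproduct, so it suffices to check the closed formula against $\sum_{I=J\cdot K} S(M_J)\,M_K = \varepsilon(M_I)\cdot 1$, and the sign-reversing involution (equivalently M\"obius inversion on the refinement/coarsening poset) you propose is the standard way this identity is established; but as written this step is a plan rather than a proof, so a fully self-contained argument would still need that involution exhibited explicitly.
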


\begin{example*}
Consider the monomial $M := M_{(3,1,4)}$. Then
\[
	\Delta(M) = M_{(3,1,4)} \otimes 1 + M_{(3,1)} \otimes M_{(4)} + M_{(3)} \otimes M_{(1,4)} + 1 \otimes M_{(3,1,4)}
\]
and
\[
	S(M) = - \left( M_{(4,1,3)} + M_{(5,3)} + M_{(4,4)} + M_{(8)}   \right).
\]
\end{example*}

\subsection{Quasisymmetric functions as a limit}

There is a construction of quasisymmetric functions as a categorical limit, which is useful for our 
purposes. Recall that $\fC$ denotes the category of finite ordered sets and order-preserving injections.
The assignment
\[
S \mapsto  \bZ[\alpha_s]_{s\in S} \qquad
(\phi: S \to T) \mapsto  g_\phi,
\]
where
\[
g_\phi (x_t) =
\begin{dcases} 
	\alpha_s, & 	\text{if there exist some } s \in S \text{ such that } \phi(s)=t, \\
	0, & 		\text{otherwise.}
\end{dcases}
\]
defines a contravariant functor from $\fC$ to the category of graded rings.

\begin{proposition}
\label{prop.qsym-limit}
For every finite ordered set $S$, consider the restriction morphism $\QSym \to \QSym_S$
from Proposition \ref{prop.qsym.restriction} for any order-preserving injection $S \incl \bZ$.
Then $\QSym$, together with this family of restriction morphisms, satisfies the universal
property of the colimit
\[
	\varprojlim_{\fC} \bZ[\alpha_s]_{s\in S}.
\]
\end{proposition}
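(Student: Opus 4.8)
The plan is to exhibit $\QSym$, equipped with the restriction maps $\pi_S \colon \QSym \to \QSym_S \incl \bZ[\alpha_s]_{s\in S}$, as the desired limit by showing that the canonical comparison map from this cone to $\varprojlim_{\fC} \bZ[\alpha_s]_{s\in S}$ is an isomorphism of graded rings. First I would check that $(\pi_S)_S$ is a cone: for an order-preserving injection $\phi\colon S \incl T$ one must verify $g_\phi \circ \pi_T = \pi_S$, which is immediate from Proposition~\ref{prop.qsym.restriction} (so that $\pi_T(M_I) = M_I$) together with the fact that $g_\phi$ merely erases the variables outside $\phi(S)$ and relabels, carrying $M_I \in \QSym_T$ to $M_I \in \QSym_S$. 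A crucial structural point is that the comparison must be made in graded rings with bounded-degree elements, exactly as $\QSym$ is defined: since every transition map $g_\phi$ preserves degree, the limit is computed degree by degree, and it suffices to show that for each $n$ the map $\QSym_n \to \varprojlim_{\fC} (\bZ[\alpha_s]_{s\in S})_n$ is an isomorphism of finite-rank $\bZ$-modules. Without the bounded-degree restriction one would instead obtain the degreewise completion $\prod_n \QSym_n$, so keeping track of the grading is not merely cosmetic.

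Fix $n$ and let $(f_S)_S$ be a compatible family with each $f_S$ homogeneous of degree $n$. The key step -- and the one I expect to be the main obstacle -- is to show that every $f_S$ is automatically quasisymmetric. For this I would use the full supply of order-preserving injections, not just the standard inclusions: given two injections $\phi, \psi\colon [r] \incl S$, compatibility of the family forces both $g_\phi(f_S)$ and $g_\psi(f_S)$ to equal $f_{[r]}$, hence to equal each other. Reading off coefficients, $g_\phi(f_S) = g_\psi(f_S)$ says precisely that the coefficient in $f_S$ of a monomial $\alpha_{t_1}^{c_1}\cdots\alpha_{t_r}^{c_r}$ with $t_1 < \dots < t_r$ depends only on the composition $(c_1,\dots,c_r)$ and not on the chosen positions $t_1 < \dots < t_r$; letting $\phi, \psi$ range over all such injections, this is exactly the defining condition of $\QSym_S$. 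The content of the proposition is thus that the combinatorics of $\fC$ encodes the quasisymmetry relations, and isolating this cleanly is the heart of the argument.

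Once each $f_S$ lies in $\QSym_S$, the maps $g_\phi$ restrict to $\mathfrak{QSym}(\phi)$, which send $M_I \mapsto M_I$ by Proposition~\ref{prop.qsym.restriction}. Writing $f_{[m]} = \sum_{\ell(I)\le m} c^{(m)}_I M_I$ and comparing along the standard inclusions $[m] \incl [m+1]$ shows $c^{(m)}_I = c^{(m+1)}_I$ whenever $\ell(I)\le m$, so there are well-defined constants $c_I$, and I may set $f := \sum_{\lvert I\rvert = n} c_I M_I \in \QSym_n$, a finite sum. It remains to verify $\pi_S(f) = f_S$ for every finite ordered set $S$: for $\lvert S\rvert \ge n$ every composition of $n$ has length at most $\lvert S\rvert$, so the system has stabilized and equality is clear, while for smaller $S$ equality holds term by term because $M_I$ maps to $0$ in $\QSym_S$ exactly when $\ell(I) > \lvert S\rvert$. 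This proves surjectivity of the comparison map; injectivity is immediate since $\pi_{[n]}$ is injective on $\QSym_n$, the monomials $M_I$ with $\lvert I\rvert = n$ forming a basis of $\QSym_{[n]}$. Assembling over all $n$ then yields the isomorphism of graded rings, and hence the universal property.
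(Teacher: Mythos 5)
Your proposal is correct and follows essentially the same route as the paper: verify the cone condition via Proposition~\ref{prop.qsym.restriction}, show that compatibility under the restriction maps forces each component of a compatible family to be quasisymmetric, assemble the stabilized coefficients into an element of $\QSym$, and get uniqueness from injectivity of $\pi_{[n]}$ on the degree-$n$ part. The only (cosmetic) difference is that you derive quasisymmetry by comparing two arbitrary injections $[r]\incl S$ at once, whereas the paper iterates the relation coming from the generating morphisms $j^n_i$ that omit a single element; your explicit remark that the bounded-degree/graded setting is what prevents the limit from being the degreewise completion is a point the paper leaves implicit.
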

\begin{proof}
For simplicity, we replace $\fC$ by the equivalent full subcategory consisting only of the objects
$[n] := \{ 1< \dots < n \}$ for $n\in \bN$. Note that the morphisms are generated by the family of morphisms
$j_i^n: [n] \to [n+1]$ for $i \in \{ 1, \dots, n+1 \}$, where $j_i^n$ is the unique map whose
image does not contain $i$.
Denote by
\[
\pi_n: \QSym \to \QSym_{[n]} \incl \bZ[\alpha_i]_{i=1, \dots, n}
\]
the projections, and
let $g^n_i := g_{j^n_i}$.
It is clear from the definition that the projections $\pi_n$ satisfy the necessary compatibility conditions,
since they preserve monomials.

To prove that $\QSym$ satisfies the universal property, let now $R^\bullet$ be any graded ring,
and assume we are given morphisms of graded rings
\[
f_n:R^\bullet\to \bZ[\alpha_1,\dots,\alpha_n]
\]
such that $g^n_i \circ f_{n+1} = f_n$, then for any $r\in R^d$ and every $1\leq i \leq n+1$, we have
\begin{equation}
\label{quasisym-relation}
f_{n+1}(r)(\alpha_1,\dots,\alpha_{i-1},0,\alpha_i,\dots,\alpha_n)=f_n(r)(\alpha_1,\dots,\alpha_n).
\end{equation}
This implies that for indices $i_1 < \dots < i_s$ and $j_1 < \dots < j_s$ and exponents
$k_1+ \dots + k_s = d$, the coefficient of $\alpha_{i_1}^{k_1} \cdots \alpha_{i_s}^{k_s}$
and the coefficient of $\alpha_{j_1}^{k_1} \cdots \alpha_{j_s}^{k_s}$ in
the homogeneous degree $d$ polynomial $f_{n+1}(r)$ agree.
This is proved by iterating the above equalities \eqref{quasisym-relation},
inserting zero everywhere except for indices in $\{i_1,\dots,i_s\}$, respectively
$\{j_1,\dots,j_s\}$. We conclude that $f_{n}(r)$ is a quasisymmetric function
of degree $d$ in $n$ variables, for every $n$.
Moreover, the value $f_e(r)$ for $e\geq 0$ is uniquely determined by $f_d(r)$,
because a quasisymmetric function of degree $d$ in any number of variables is determined
by its coefficients for monomials in the first $d$ variables, and because $f_0(r),\dots,f_{d-1}(r)$
are determined by \eqref{quasisym-relation}.
Hence, we can define $\psi(r)\in A_d$ by extending the quasisymmetric function $f_d(r)$
to a countable number of variables. By construction, we have $\pi_n \circ \psi = f_n$.
The lifting $\psi$ is unique, because the map $\pi_d|_{A_d}$ is injective.
\end{proof}

\begin{remark*}
With the same proof, we can also conclude that
\[
\varprojlim_{\fC_{\leq m}} \bZ[\alpha_s]_{s\in S} \cong \QSym_m.
\]
\end{remark*}

\section{Definitions and setup}
We denote the Chow groups of an algebraic stack $\cX$ by $\Chow_\bullet(\cX)$,
and the Chow ring of a smooth, equidimensional algebraic stack by
\[
\Chowbullet(\cX) = \Chow_{\dim(\cX) - \bullet} (\cX).
\]
with cohomological grading, assuming the latter is defined (for example,
if it has a stratification by quotient stacks).
All algebraic stacks are assumed to be defined over the field $k$.

\subsection{Chow groups for algebraic stacks admitting a good filtration}
In \cite{Kresch-cyc}, Chow groups, and the intersection ring for smooth algebraic
stacks, are only defined for an algebraic stack of finite type over $k$.
We will need to extend the definition to stacks which are close enough to
being of finite type for the purpose of intersection theory.
\begin{definition}
A \emph{good filtration by finite-type substacks} on an algebraic stack $\cX$,
which is assumed to be locally of finite type and of finite dimension, is
a collection $\{\cX_n\}_{n\in \bN}$ such that
\begin{itemize}
\item $\cX_n\subset \cX$ is an open substack of finite type;
\item $\cX_n \subset \cX_m$ for $n < m$;
\item $\dim (\cX \setminus \cX_n) < \dim X - n$.
\end{itemize}
In this situation, we will abbreviate to say that $\cX$ \emph{admits a good filtration}.
A morphism of algebraic stacks admitting a good filtration \emph{respects the filtration}
if it factors through the filtrations in the natural way, up to a degree shift.
\end{definition}

\begin{remark*}
The last condition could be weakened to only requiring that the $\cX_n$
jointly cover $\cX$, and that $\lim_{n \to \infty} \codim(\cup_{i=0}^n \cX_i) = \infty$;
however, all the stacks in our example already come with
a natural good filtration, and it is possible to pass from any filtration
by open substacks of finite type to a good filtration.
\end{remark*}

\begin{proposition}
There is a Chow group functor 
$\Chow_d(\cX) := \varprojlim_n \Chow_d(\cX_n)$
defined for stacks admitting a good filtration, satisfying the usual properties
as in \cite{Kresch-cyc},
with functoriality for morphisms of algebraic stacks which respect the filtration.
In fact, we have 
$\Chow_d(\cX) = \Chow_d(\cX_{\dim (\cX) + d}).$
If $\cX$ is smooth and admits a stratification by quotients stacks, there is also a ring
structure defined on the Chow groups, and we can compute the product
$\Chow^{d'}(\cX) \otimes \Chow^{d''}(\cX) \to \Chow^{d'+d''}(\cX)$
on $\cX_{d'+d''}$.
\end{proposition}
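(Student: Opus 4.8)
The plan is to reduce everything to the finite-type theory of \cite{Kresch-cyc} by showing that the inverse system $\{\Chow_d(\cX_n)\}_n$ is eventually constant. First I would record that for $n<m$ the open immersion $\cX_n \hookrightarrow \cX_m$ induces, by flat (restriction) pullback, a transition map $\Chow_d(\cX_m)\to\Chow_d(\cX_n)$, so that $\varprojlim_n \Chow_d(\cX_n)$ is well defined. The heart of the matter is that these transition maps are isomorphisms once $n$ is large relative to $d$. Applying the localization exact sequence to $\cX_n\hookrightarrow\cX_m$ with closed complement $Z_{n,m} := \cX_m\setminus\cX_n$ gives
\[
\Chow_d(Z_{n,m}) \longrightarrow \Chow_d(\cX_m) \longrightarrow \Chow_d(\cX_n) \longrightarrow 0.
\]
Since $Z_{n,m}\subseteq \cX\setminus\cX_n$, the good-filtration hypothesis yields $\dim Z_{n,m} < \dim\cX - n$, and as $\Chow_d$ vanishes above the dimension we get $\Chow_d(Z_{n,m})=0$ as soon as $n\geq \dim\cX - d$. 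Hence in that range the transition maps are isomorphisms, the system is constant, and $\Chow_d(\cX)=\Chow_d(\cX_N)$ for every $N\geq \dim\cX-d$; equivalently, in codimension $c$ the system stabilizes already at $n=c$, which is the value $\cX_{d'+d''}$ used in the product statement. In particular the limit is computed by a single finite-type stack.

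Next I would transport the ``usual properties.'' Flat pullback, proper pushforward, the localization sequence, and Gysin maps all exist on each finite-type $\cX_n$ by \cite{Kresch-cyc}; since for each fixed $d$ the limit is literally $\Chow_d(\cX_N)$ for one $N$, every such identity (exactness, projection formula, compatibility of pullback and pushforward) descends degree by degree with no further work once one checks it is compatible with the restriction maps. For functoriality, a morphism $f\colon\cX\to\mathcal{Y}$ respecting the filtration carries the filtration of $\cX$ into that of $\mathcal{Y}$ up to a fixed index shift, so $f$ induces compatible maps on the finite-type pieces that commute with the restriction transition maps; passing to the (stabilized) limits gives the induced map $\Chow_d(\cX)\to\Chow_{d+\mathrm{shift}}(\mathcal{Y})$, the dimension shift matching the relative-dimension shift of $f$.

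For the ring structure, assume $\cX$ is smooth with a stratification by quotient stacks. Each $\cX_n$ is then open in $\cX$, hence smooth of the same dimension and stratified by quotient stacks, so it carries Kresch's intersection product. Restriction along an open immersion of smooth stacks is a homomorphism for the intersection product, so the transition maps are ring maps and $\Chowbullet(\cX)=\varprojlim_n\Chowbullet(\cX_n)$ acquires a graded ring structure. To evaluate a product $\Chow^{d'}(\cX)\otimes\Chow^{d''}(\cX)\to\Chow^{d'+d''}(\cX)$, note that all three groups have already stabilized on $\cX_{d'+d''}$ (since in codimension $c$ stabilization occurs at $n=c$ and $d'+d''\geq d',d''$), so the product is simply read off from the intersection product on the single smooth finite-type stack $\cX_{d'+d''}$.

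I expect the main obstacle to be the compatibility bookkeeping rather than any single deep input: once stabilization is in place, the substance is (i) the localization/dimension-vanishing step above, and (ii) verifying that restriction along open immersions genuinely commutes with \emph{all} the operations, and in particular is multiplicative for the intersection product — this last point is exactly where smoothness and the stratification-by-quotients hypothesis are needed, since they are what make the product well behaved and compatible with flat pullback in the first place. The degree-shift matching for filtration-respecting morphisms will also require care to state uniformly, but it reduces to the finite-type functoriality applied on the stabilized pieces.
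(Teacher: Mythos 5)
Your proposal is correct and is exactly the argument the paper has in mind: its entire proof reads ``This is a straightforward consequence of excision,'' and your localization-sequence-plus-dimension-vanishing stabilization argument is precisely the content of that remark. (Your stabilization index $n \geq \dim\cX - d$, i.e.\ $n = $ codimension, is the one actually used elsewhere in the paper, e.g.\ computing products on $\cX_{d'+d''}$.)
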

\begin{proof}
This is a straightforward consequence of excision.
\end{proof}

All of the algebraic stacks appearing in this note admit a good filtration,
and we will use the colimit above implicitly when discussing their Chow groups.

\subsection{\'Etale-local models}
We collect some facts about the stack quotients
$\cA^n := [\bA^n / \bG_m^n] = (\cA^1)^{\times n}$ for $n \in \bZ_{\geq 0}$
and their Chow rings.
Recall that $\cA^1$ is the moduli stack of pairs $(\cL, s)$
of a line bundle $\cL$ and a section $s$ of $\cL$, and hence $\cA^n$ is the
moduli stack of $n$-tuples of such pairs.

Since $\cA^n$ is a vector bundle of rank $n$ over
$B \bG_m^n \cong (B\bG_m)^{\times n}$,
the pullback of cycle classes from $B \bG_m^n$ is an isomorphism.
Let $[\bA^1 / \bG_m^n]_i$ correspond to the one-dimensional representation of
$B\bG_m^n$ given by $x \mapsto t_i x$.

\begin{proposition}
The graded ring $\Chowbullet (B\bG_m^n)$ is isomorphic to $\bZ[\alpha_1,\dots,\alpha_n]$,
where $\alpha_i$ corresponds to the class of the zero section of $[\bA^1 / \bG_m^n]_i$ under the
Gysin homomorphism
$\Chowbullet([\bA^1 / \bG_m^n]_i) \to \Chowbullet (B\bG_m^n)$.
\end{proposition}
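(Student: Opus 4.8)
The plan is to compute $\Chowbullet(B\bG_m^n)$ by the Edidin--Graham approximation of the classifying stack by finite-dimensional quotients, starting from the single-factor case. For one factor, the $N$-th algebraic approximation of $B\bG_m$ is $(\bA^{N+1}\setminus\{0\})/\bG_m \cong \bP^N$, whose Chow ring is the truncated polynomial ring $\bZ[h]/(h^{N+1})$ with $h$ the hyperplane class. Since the open immersion $(\bA^{N+1}\setminus\{0\})/\bG_m \incl [\bA^{N+1}/\bG_m]$ has complement of codimension $N+1$, the equivariant Chow group $\Chow^d_{\bG_m}(\mathrm{pt})$ agrees with $\Chow^d(\bP^N)$ for all $N\geq d$; letting $N\to\infty$ stabilizes each graded piece and yields $\Chowbullet(B\bG_m)\cong \bZ[\alpha]$ as a graded ring, with $\alpha$ the stabilized hyperplane class, which is $c_1$ of the universal line bundle.

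For the product, I would use the analogous approximation $(\bA^{N+1}\setminus\{0\})^n/\bG_m^n \cong (\bP^N)^n$ of $B\bG_m^n$. The Chow ring of a product of projective spaces is $\bZ[h_1,\dots,h_n]/(h_1^{N+1},\dots,h_n^{N+1})$, with each $h_i$ pulled back from the $i$-th factor; passing to the limit over $N$ gives $\Chowbullet(B\bG_m^n)\cong \bZ[\alpha_1,\dots,\alpha_n]$, where $\alpha_i$ is the stabilized hyperplane class of the $i$-th factor. The one point that deserves care, and is really the main obstacle, is the product formula: one must know that no extra classes appear in the Chow ring of $(\bP^N)^n$ beyond those generated by the $h_i$. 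This holds because products of projective spaces are cellular (equivalently, linear varieties), so the exterior product induces the expected isomorphism on Chow rings; alternatively one can cite the Edidin--Graham computation of $\Chow^*_T$ for a split torus $T$ directly.

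It then remains to match $\alpha_i$ with the Gysin description in the statement. The line bundle $[\bA^1/\bG_m^n]_i$ is the total space of the $i$-th universal line bundle $\cL_i$ over $B\bG_m^n$, and its zero section $z\colon B\bG_m^n \incl [\bA^1/\bG_m^n]_i$ is a closed immersion whose image is the vanishing locus of the universal section. Applying the Gysin homomorphism $z^!$ to the class $z_*[B\bG_m^n]$ of the zero section and using the self-intersection formula, together with the fact that the normal bundle of the zero section is $\cL_i$ itself, gives $z^! z_*[B\bG_m^n] = c_1(\cL_i)\cap [B\bG_m^n]$. Hence the class produced by the Gysin homomorphism is $c_1(\cL_i)$, which under the identification above is precisely the stabilized hyperplane class $\alpha_i$ of the $i$-th factor, completing the proof.
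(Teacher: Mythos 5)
Your proof is correct. The paper gives no proof of this proposition at all --- it is stated as a known fact, namely the Edidin--Graham/Totaro computation of the equivariant Chow ring of a point for a split torus --- so your argument supplies exactly the standard justification the paper leaves implicit: approximation of $B\bG_m^n$ by $(\bP^N)^n$, stabilization of each graded piece via excision against the codimension-$(N+1)$ complement, and the self-intersection formula $z^! z_*[B\bG_m^n] = c_1(\cL_i)\cap[B\bG_m^n]$ to match $\alpha_i$ with the Gysin class in the statement. You also correctly isolate the one genuinely nontrivial input, the K\"unneth isomorphism for $\Chowbullet((\bP^N)^n)$, which holds because products of projective spaces are cellular. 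No gaps.
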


\begin{corollary} We have
\[
\Chowbullet (\cA^n) \cong \bZ[\alpha_1,\dots,\alpha_n].
\]
\end{corollary}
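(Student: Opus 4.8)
The plan is to deduce the statement from the preceding Proposition together with the vector bundle structure of $\cA^n$ over $B\bG_m^n$ already noted above. First I would make this structure explicit: the projection $\bA^n \to \mathrm{pt}$ is equivariant for the standard $\bG_m^n$-action, and passing to quotient stacks yields a morphism $p\colon \cA^n = [\bA^n/\bG_m^n] \to [\mathrm{pt}/\bG_m^n] = B\bG_m^n$ realizing $\cA^n$ as the total space of the rank-$n$ vector bundle associated with the representation $\bigoplus_{i=1}^n L_i$, where $L_i$ carries the weight-$t_i$ action. In particular $p$ is flat of relative dimension $n$ with fibers affine spaces.

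Next I would invoke homotopy invariance of Chow groups for algebraic stacks, in the form established in \cite{Kresch-cyc}: flat pullback along the vector bundle $p$ gives an isomorphism $p^*\colon \Chow_k(B\bG_m^n) \xrightarrow{\ \sim\ } \Chow_{k+n}(\cA^n)$ for every $k$. Since both stacks are smooth of finite type with $\dim \cA^n = \dim B\bG_m^n + n$, rewriting in the cohomological grading $\Chow^i = \Chow_{\dim - i}$ turns this into a degree-preserving isomorphism $p^*\colon \Chowbullet(B\bG_m^n) \xrightarrow{\ \sim\ } \Chowbullet(\cA^n)$; as $p^*$ is multiplicative for the intersection products on these smooth stacks, it is an isomorphism of graded rings. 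This is precisely the assertion, recorded above, that pullback from $B\bG_m^n$ is an isomorphism.

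Finally I would compose with the Proposition, which identifies $\Chowbullet(B\bG_m^n) \cong \bZ[\alpha_1,\dots,\alpha_n]$, to obtain $\Chowbullet(\cA^n) \cong \bZ[\alpha_1,\dots,\alpha_n]$. There is no serious obstacle here, as all the content sits in the vector bundle invariance of Chow groups and in the preceding computation of $\Chowbullet(B\bG_m^n)$. The only point meriting care is the bookkeeping of the degree shift: one must check that homotopy invariance is compatible with the passage from the homological to the cohomological grading, which holds exactly because the relative dimension of $p$ equals the difference $\dim \cA^n - \dim B\bG_m^n$.
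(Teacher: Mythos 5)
Your argument is correct and is exactly the route the paper takes implicitly: the text preceding the Proposition already records that $\cA^n$ is a rank-$n$ vector bundle over $B\bG_m^n$ and that pullback of cycle classes is therefore an isomorphism, so the corollary follows by composing with the identification $\Chowbullet(B\bG_m^n)\cong\bZ[\alpha_1,\dots,\alpha_n]$. Your additional care with the degree shift and the ring structure of the pullback is sound but adds nothing beyond what the paper leaves as immediate.
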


\begin{lemma}\label{lem_NorBunBGm}
The normal bundle of $B\bG_m^n$ in $\cA^n$
has top Chern class $\alpha_1 \cdots \alpha_n$.
\end{lemma}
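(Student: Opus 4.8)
The plan is to identify the normal bundle as an explicit sum of line bundles and then apply multiplicativity of the top Chern class. First I would use the observation already recorded above, namely that $\cA^n$ is a rank-$n$ vector bundle over $B\bG_m^n$ and that the inclusion $B\bG_m^n \incl \cA^n$ is precisely its zero section: it is induced by the $\bG_m^n$-equivariant inclusion of the origin $\{0\} \incl \bA^n$. Since the normal bundle of the zero section of a vector bundle is canonically the vector bundle itself, the normal bundle $N$ of $B\bG_m^n$ in $\cA^n$ is identified with $\cA^n$, viewed as a bundle over $B\bG_m^n$.

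Next I would decompose $N$ into line bundles. The action of $\bG_m^n$ on $\bA^n = \bA^1 \times \dots \times \bA^1$ is diagonal, with $(t_1, \dots, t_n)$ scaling the $i$-th coordinate by $t_i$, so $\bA^n$ splits as a direct sum of the one-dimensional representations $x \mapsto t_i x$. Passing to quotient stacks yields a canonical isomorphism
\[
N \cong \bigoplus_{i=1}^n [\bA^1 / \bG_m^n]_i
\]
of vector bundles over $B\bG_m^n$, whose summands are exactly the line bundles appearing in the preceding proposition.

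Finally I would compute the top Chern class by the Whitney sum formula: since the top Chern class of a direct sum of line bundles is the product of their first Chern classes,
\[
c_{\mathrm{top}}(N) = c_n(N) = \prod_{i=1}^n c_1\bigl([\bA^1 / \bG_m^n]_i\bigr) = \alpha_1 \cdots \alpha_n,
\]
where the last equality invokes the definition of $\alpha_i$ from the preceding proposition.

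I do not anticipate a genuine obstacle, since both stacks are of finite type over $k$ and Kresch's intersection theory applies verbatim. The only point worth spelling out is the final identification $c_1\bigl([\bA^1 / \bG_m^n]_i\bigr) = \alpha_i$: by definition $\alpha_i$ is the image of the class of the zero section of $[\bA^1 / \bG_m^n]_i$ under the Gysin homomorphism, and for any line bundle the zero-section class corresponds, under the homotopy-invariance isomorphism, exactly to its first Chern class. Thus the computation is immediate once the normal bundle has been described as the direct sum above.
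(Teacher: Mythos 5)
Your proposal is correct and follows essentially the same route as the paper: identify the normal bundle of the zero section $B\bG_m^n \incl \cA^n$ with $\cA^n$ itself as a vector bundle over $B\bG_m^n$, split it into the line bundles $[\bA^1/\bG_m^n]_i$, and multiply first Chern classes via the Whitney sum formula. Your added remark justifying $c_1([\bA^1/\bG_m^n]_i) = \alpha_i$ is a welcome bit of extra care that the paper leaves implicit.
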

\begin{proof}
We can compute the normal bundle on any smooth atlas.
Choose the atlas $\bA^n$ to find that the normal bundle is $\cA^n$ itself,
seen as a vector bundle over $B\bG_m$. This a sum of line bundles
\[
N_{B\bG_m^n} \cA^n=\bigoplus_{i=1}^n [\bA^1 / \bG_m^n]_i.
\]
Hence we have
\[
c_n (N_{B\bG_m^n} \cA^n)=\prod_{i=1}^n \alpha_i.
\]
\end{proof}

\begin{definition}
We say that an algebraic stack $\fY$ over $k$ has the \emph{Chow K\"unneth property}
if for all algebraic stacks $\cX$ of finite type over $k$, the natural morphism
\begin{equation}
\label{eqn.chowkunneth}
\Chowbullet(\cX)\otimes \Chowbullet(\fY) \to \Chowbullet (\cX\times_k \fY)
\end{equation}
induced by functoriality is an isomorphism.
\end{definition}
\begin{remark*}
If $\fY$ has the Chow K\"unneth property, then \eqref{eqn.chowkunneth} will also be true
if $\cX$ only admits a good filtration.
\end{remark*}
\begin{lemma}
The classifying stack $B\bG_m$ has the Chow K\"unneth property.
\end{lemma}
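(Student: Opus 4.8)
The plan is to reduce the assertion to the projective bundle formula by means of the finite-dimensional approximations used to define the Chow ring of a quotient stack. Since $\cX$ is of finite type and $B\bG_m = [\mathrm{pt}/\bG_m]$ is of finite type, so is $\cX \times B\bG_m = [\cX/\bG_m]$ with $\bG_m$ acting trivially on $\cX$, and its Chow ring is defined in the sense of Kresch. Recall from the earlier discussion that $\Chowbullet(B\bG_m) \cong \bZ[\alpha]$, with $\alpha \in \Chow^1$ the first Chern class of the universal line bundle.

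First I would fix a cohomological degree $i$ and pass to an approximation. Let $U_N = \bA^{N+1}\setminus\{0\}$ carry the scaling action of $\bG_m$, so that $[U_N/\bG_m] \cong \bP^N$ and $[(\cX \times U_N)/\bG_m] \cong \cX \times \bP^N$. Because the complements $[\{0\}/\bG_m]$ and $\cX \times [\{0\}/\bG_m]$ have codimension $N+1$ in the ambient quotients $[\bA^{N+1}/\bG_m]$ and $[(\cX\times\bA^{N+1})/\bG_m]$, the approximation computes Chow groups in degree $i$ faithfully for all $N \ge i$:
\[
\Chow^i(B\bG_m) \cong \Chow^i(\bP^N), \qquad \Chow^i(\cX \times B\bG_m) \cong \Chow^i(\cX \times \bP^N).
\]

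Next I would apply the projective bundle formula to the trivial bundle $\cX \times \bP^N \to \cX$, which yields
\[
\Chowbullet(\cX \times \bP^N) \cong \Chowbullet(\cX)[\xi]/(\xi^{N+1}) \cong \Chowbullet(\cX)\otimes_{\bZ} \Chowbullet(\bP^N),
\]
with $\xi$ the hyperplane class. Since $\alpha$ restricts to $\xi$ under the approximation, this identification is compatible with the natural Künneth morphism \eqref{eqn.chowkunneth}. Finally, I would match the two sides in degree $i$: as $\Chowbullet(B\bG_m) = \bZ[\alpha]$ is $\bZ$-free on the classes $\alpha^j$ and $\Chowbullet(\cX)$ is finitely generated in each degree, the degree-$i$ part of $\Chowbullet(\cX)\otimes \Chowbullet(B\bG_m)$ is the finite sum $\bigoplus_{j=0}^{i} \Chow^{i-j}(\cX)\cdot\alpha^j$, which the projective bundle formula identifies with $\Chow^i(\cX \times \bP^N)$ for every $N \ge i$, hence with $\Chow^i(\cX \times B\bG_m)$. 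Thus \eqref{eqn.chowkunneth} is an isomorphism in every degree.

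The main point to be careful about is the stabilization step: one must verify that the finite-dimensional approximations compute $\Chow^i$ correctly in the stated range (the codimension estimate for the complement) and that, under these identifications, the restriction of $\alpha$ to $\xi$ makes the finite-level projective bundle isomorphisms compatible as $N$ grows, so that they genuinely assemble the natural map \eqref{eqn.chowkunneth}. No true limiting difficulty arises, since each graded piece is already finite and stabilizes for $N \ge i$.
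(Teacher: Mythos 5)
Your proof is correct and follows essentially the same route as the paper: approximate $B\bG_m$ by $\bP^N = [(\bA^{N+1}\setminus\{0\})/\bG_m]$, use the codimension estimate on the complement to identify $\Chow^i$ for $N \geq i$, and invoke the projective bundle formula for the trivial bundle to get the K\"unneth isomorphism degree by degree. The only cosmetic difference is that the paper phrases the approximation via the vector bundle $[\bA^n/\bG_m] \to B\bG_m$ minus its zero section rather than via $U_N$ directly, which is the same computation.
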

\begin{proof}
First note that for any $n\geq1$, projective space $\mathbb{\bP}^{n-1}$ has the Chow
K\"{u}nneth property, by the formula for the Chow rings of projective bundles,
specialized to the case of a trivial bundle (cf. \cite[3.3]{fulton}). To prove the lemma,
note that $\cX \times [\bA^n/\bG_m]$ is a vector bundle over $\cX\times B\bG_m$,
which is $\cX\times\bP^{n-1}$ in codimension smaller than $n$.%
\footnote{In the topological setting, we have $BU(1) = \bC\bP^\infty$.
In this spirit, one could regard $B\bG_m$ as an algebraic version of $\bP^\infty$.}
By choosing $n$
high enough and applying the Chow K\"{u}nneth property of projective space,
we obtain the statement in any fixed degree.
\end{proof}
As a consequence, also $\cA^n$, for any $n$, has the Chow K\"{u}nneth property.
Occasionally, we will use coordinates indexed by a finite set $J$ instead of $\{1,\dots,n\}$.
In this situation, we will use the symbols $\bA^J$ and $\cA^J$. 

\subsection{The stack of configurations of line bundles}
We continue to use the notation $\cA^n$ from the last section. In this section, we
introduce the stack of expanded pairs $\sT$. The following treatment has been adapted
from \cite{ExpDegPairs}.
There are many equivalent moduli definitions (and universal families) for $\sT$; we will
describe one of them for the convenience of the reader, where $\sT$ appears in the
form of the moduli stack of \emph{configurations of line bundles}. This point of view
has been inspired from the approach to logarithmic structures in terms of line bundles
in \cite{BorneVistoliParabolic}.

\begin{definition}
A \emph{sheaf of totally ordered finite sets} is a constructible sheaf $E$ of partially ordered
nonempty\footnote{except over the empty set.} finite sets on the \'etale site of a scheme $S$ such that
any two sections are locally comparable. We identify $E$ with the stack on the \'etale site of $S$ whose objects
are sections of $E$, and where a unique morphism $x\to y$ exists if and only if $x \geq y$.
\end{definition}

We let $\mathfrak{Pic}$ be the category of line bundles.
The objects of $\mathfrak{Pic}$ over $S$ are line bundles on $S$,
and its morphisms are morphisms of line bundles. We should remark that morphisms are \emph{not}
required to be isomorphisms of line bundles, so $\mathfrak{Pic}$ is not a category fibered in groupoids,
as opposed to $B\bG_m$.

\begin{definition}\label{def.T-def}
Let $\sT$ be the stack whose objects are pairs $(E,L)$ of a sheaf of totally ordered sets $E$
and a morphism of stacks $L: E \to \mathfrak{Pic}$ such that
\begin{enumerate}
\item If $x\geq y$ are sections of $E$ and $L(x) \to L(y)$ is an isomorphism, then $x=y$.
\item $L(0) = \cO$, where $0$ is the unique section of $E$ that is minimal in all fibers.
\end{enumerate}
We call such an $L$ a \emph{diagram of line bundles} indexed by $E$.
We require that morphisms in $\sT$ are the identity on $L(0)$. We call $\sT$ the
stack of totally ordered configurations of line bundles.
\end{definition}

We recall the definition of the moduli stack of genus $g$ semistable curves with $n$
marked points $\fM_{g,n}^{ss} \subset\fM_{g,n}$,
whose objects are prestable curves of genus $g$ endowed with $n$ sections
such that the relative dualizing sheaf, twisted by the sections, has non-negative multidegree.
It is classical that $\fM_{g,n}^{ss}$ is an algebraic stack locally of finite presentation over $k$.

We will only need the genus $0$ case; for $n>0$, its geometric points are
nodal curves such that each component is isomorphic to projective space and such that each
component has at least two special points (either nodes or marked points).

\begin{theorem}[\cite{ExpDegPairs}]
There is an isomorphism of $\sT$ with the open substack of $\fM_{0,3}^{ss}$
where the last two points lie on the same component.
\end{theorem}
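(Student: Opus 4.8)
The plan is to exhibit an explicit equivalence between $\sT$ and the named open substack of $\fM_{0,3}^{ss}$, by constructing a $3$-pointed semistable curve from each configuration of line bundles and, conversely, a configuration from each such curve, and then checking that the two assignments are mutually inverse and functorial in families. Since both sides are stacks in groupoids over $k$-schemes, it suffices to produce these constructions étale-locally in a canonical way, so that they descend.

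First I would build the curve from a configuration $(E,L)$ over a scheme $S$. Working étale-locally, where $E$ is the constant chain $\{0 < 1 < \dots < \ell\}$, the structure morphisms of the diagram are maps $\phi_i \colon L(i) \to L(i-1)$ for $i = 1, \dots, \ell$, i.e. sections of $\mathrm{Hom}(L(i), L(i-1)) = L(i-1) \otimes L(i)^{-1}$. To this data I associate a chain of projective lines with one component $C_i$ for each $i \in E$, gluing the section at infinity of $C_{i-1}$ to the zero section of $C_i$; the transition between consecutive components is modeled on $\bP(L(i-1) \oplus L(i))$ so that the twists by the $L(i)$ are recorded, and the node between $C_{i-1}$ and $C_i$ is smoothed exactly over the locus where $\phi_i$ is an isomorphism. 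The first marked point $p_1$ sits on the minimal component $C_0$, pinned down by the normalization $L(0) = \cO$ in condition (2), and the last two marked points $p_2, p_3$ sit on the maximal component $C_\ell$. Condition (1) guarantees that no structure map is a global isomorphism between distinct indices, so that the chain length equals the genuine number of unsmoothed nodes and no component is superfluous; one then checks that the result is a family of genus-$0$ semistable curves with $p_2, p_3$ on a common component, and that the construction is canonical, hence descends to a morphism $\sT \to \fM_{0,3}^{ss}$ into the asserted open substack.

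Conversely, from a family $\pi \colon \mathcal{C} \to S$ of $3$-pointed semistable genus-$0$ curves with $p_2, p_3$ on one component, I would recover $(E,L)$. Étale-locally the dual graph is a chain, so I take $E$ to be the sheaf of irreducible components, totally ordered by position along the chain from the $p_1$-end to the $p_2p_3$-end; the minimal section is the component carrying $p_1$ and the maximal section is the canonically determined component carrying $p_2, p_3$. To each component I attach the line bundle assembled from the smoothing parameters of the intervening nodes—the tensor product of the relative tangent lines at the branches of each node lying between that component and the root—normalized so that $L(0) = \cO$, with the $\phi_i$ given by the corresponding node-smoothing sections. Condition (1) then records precisely which nodes are unsmoothed, and (2) holds by construction.

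Finally I would verify that these two functors are mutually inverse and compatible with morphisms, the key compatibility being that automorphisms match: the $\bG_m$ scaling an inserted component corresponds to the automorphism of the diagram scaling the associated $L(i)$, so that over the deepest stratum both sides have automorphism group $\bG_m^\ell$ (from the $\ell$ two-pointed components $C_0, \dots, C_{\ell-1}$, respectively from the $\ell$ scalings of $L(1), \dots, L(\ell)$ fixing $L(0)$). The main obstacle is to carry all of this out over an arbitrary base rather than merely over geometric points: one must match the order structure on $E$ together with the non-isomorphism condition (1) against the dual graph and the node-smoothing data in families, using étale descent and taking care that $\mathfrak{Pic}$ is \emph{not} fibered in groupoids, so that the direction of the structure morphisms $\phi_i$ and the loci where they are isomorphisms carry genuine geometric content. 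Establishing this families-level dictionary, and checking that the two constructions invert one another through it, is where the technical weight of the argument lies.
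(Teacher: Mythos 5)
This theorem is not proved in the paper at all: it is quoted from \cite{ExpDegPairs}, so there is no in-paper argument to compare yours against. Judged on its own terms, your outline identifies the right dictionary, and it is consistent with everything the paper later relies on: on the relevant open substack the dual graph is forced to be a chain from the $p_1$-leaf to the $p_2p_3$-component, the configuration of line bundles is assembled from the node-smoothing line bundles and their sections (this is exactly the shape of the universal diagram $L_1^\vee\otimes\cdots\otimes L_n^\vee\to\cdots\to L_1^\vee\to\cO$ the paper writes down on $\cA^J$), and your automorphism count $\bG_m^\ell$ on both sides matches the paper's description of accordions.

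The genuine gap is in the forward direction. "The node between $C_{i-1}$ and $C_i$ is smoothed exactly over the locus where $\phi_i$ is an isomorphism" is a description of the desired fibers, not a construction of a flat family of semistable curves over an arbitrary base $S$; gluing the projective bundles $\bP(L(i-1)\oplus L(i))$ along sections produces only the most degenerate fiber and gives no mechanism for smoothing a node over an open locus of $S$. The standard repair is to work on the \'etale atlas: over $\bA^J$ one writes the explicit $\bG_m^J$-equivariant family with local equations $x_iy_i=t_i$ (equivalently, an iterated blow-up of $\bP^1\times\bA^J$), descends it to $\cA^J$, checks compatibility with the open embeddings $\cA^J\to\cA^K$, and obtains the morphism $\sT\to\fM_{0,3}^{ss}$ from the colimit presentation rather than directly from $(E,L)$. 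Once that morphism exists, your concluding criterion (representable, \'etale, bijective on geometric points) is the right one and is the same one the paper uses for its map $\sT^{\times 3}\to\fM_{0,3}^{ss}$; but \'etaleness itself requires the deformation-theoretic identification of the smoothing parameter of each node with the corresponding $\phi_i$, which is precisely the step your sketch asserts rather than proves. Relatedly, in the inverse direction "the sheaf of irreducible components" must be formulated as the constructible quotient of a constant sheaf by identifications over the loci where sections are nonvanishing (as in the paper's definition of $E$ on $\cA^J$), since the components of nearby fibers genuinely merge; you flag this but it is where the work is.
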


There is also a useful description of $\sT$ as a colimit. Given a finite, possibly empty,
ordered set $J$, there is a natural augmentation $\tilde{J}$, which is the union $J \sqcup \{ 0 \}$
of $I$ and a smallest element $0$. An order-preserving injection of finite sets $J \to K$
induces an open embedding $\cA^J \to \cA^K$. Assigning $J \mapsto \cA^J$,
together with the above morphisms, give rise to
diagrams of algebraic stacks indexed by $\cC_{\leq k}$, respectively $\cC$.

There are natural diagrams of line bundles on $\cA^J$ as follows. Consider the universal
family on $\cA^J$, that is, a collection $((\cL_i, s_i))_{i\in J}$ of line bundles and
sections.
Let $E$ be the quotient of the constant sheaf $\tilde{J}$ on $\cA^J$ by the relation
$i \sim i + 1$ on the locus where $s_{i+1}$ is nonzero.
Denote the elements of $J$ by $\{1 < \dots < n \}$ for $\vert J \vert = n$.
Then the following sequence of morphisms of line bundles defines a morphism $\tilde{J} \to \mathfrak{Pic}$
which descends to a diagram of line bundles on $\cA^J$ indexed by $E$.
\[
L_1^\vee \otimes \dots \otimes L_n^\vee \xrightarrow{s_n}
L_1^\vee \otimes \dots \otimes L_{n-1}^\vee \xrightarrow{s_{n-1}}
\dots \xrightarrow{s_2} L_1^\vee \xrightarrow{s_1} \cO
\]
This gives rise to morphisms $\cA^J \to \sT$ for every $J$,
compatible with the embeddings $\cA^J \incl \cA^K$.

\begin{proposition}[{\cite[Prop 8.3.1]{ExpDegPairs}}]
These morphisms are \'etale and induce an equivalence
\begin{equation}
\label{eq.injlimT}
\displaystyle\varinjlim_\cC \cA^J \xrightarrow{\sim}\sT.
\end{equation}
\end{proposition}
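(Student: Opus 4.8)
The plan is to prove the two assertions in turn: that every structure map $\cA^J \to \sT$ is \'etale, and that the resulting morphism $\Phi\colon \varinjlim_\cC \cA^J \to \sT$ out of the colimit is an equivalence. Since both stacks are locally of finite presentation, for \'etaleness I would verify the infinitesimal lifting criterion directly, using that a point of $\cA^J$ is reconstructible from its associated configuration. Fix a square-zero extension $A_0 \hookrightarrow A$, an object of $\cA^J$ over $A_0$, and a compatible object $(E,L)$ of $\sT$ over $A$; because the \'etale site is insensitive to the nilpotent ideal, the combinatorial quotient map $\tilde{J} \to E$ is the same over $A$ and $A_0$. The line bundles at the nodes of $E$ and the non-merged connecting maps are then prescribed by $L$ over $A$, so the only freedom in lifting the $\cA^J$-object is the choice of the invertible sections at the merged steps, which form a torsor under the square-zero ideal. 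As recorded in the discussion of the cocone below, rescaling a merged section does not alter the descended configuration, and any two such choices are identified by a unique automorphism coming from the corresponding $\bG_m$-factors that restricts to the identity over $A_0$. Hence the groupoid of lifts is contractible, which is exactly formal \'etaleness; the first-order shadow of this computation matches the two sides, since at a length-$m$ configuration the tangent spaces are each $\cong k^m$ and the automorphism groups each $\cong \bG_m^m$, confirming the count.

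For the equivalence I would construct a quasi-inverse to $\Phi$ by \'etale-local lifting. Given an object $(E,L)$ of $\sT$ over $S$, I would first pass to an \'etale cover of $S$ on which the finitely many relevant line bundles $L(x)$ are trivialized and on which the constructible sheaf $E$ admits a presentation as a quotient of a constant chain $\tilde{[n]}$; in practice one orders all the local sections as $0 = e_0 \le e_1 \le \dots \le e_n$ and records the connecting maps $t_i$ as honest regular functions $s_i$. This is precisely the datum of a morphism $S \to \cA^{[n]}$ whose associated configuration is $(E,L)$, so it furnishes a local lift into the diagram. The choice of presentation is not canonical, but enlarging the chain corresponds exactly to inserting merged (invertible) steps, that is, to composing with a transition embedding $\cA^{[n]} \incl \cA^{[n']}$ of the diagram indexed by $\cC$; since any two presentations are dominated by a common refinement, the corresponding local lifts become identified in the colimit, and the local objects glue to a well-defined object of $\varinjlim_\cC \cA^J$, giving the inverse up to canonical $2$-isomorphism.

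The compatibility making $\{\cA^J \to \sT\}$ a cocone---needed to define $\Phi$---is the observation that the open embedding $\cA^J \incl \cA^K$ attached to an injection $J \hookrightarrow K$ realizes $\cA^J$ as the locus in $\cA^K$ where the inserted sections are invertible, and that inverting a section merges the corresponding step of the chain $L_1^\vee \otimes \dots \to \cO$ without changing the descended pair $(E,L)$; the two composites to $\sT$ therefore agree canonically. This is also the input used above in establishing \'etaleness.

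The step I expect to be the main obstacle is the \emph{matching of overlaps}, namely showing that the redundancy among the charts is exactly the one encoded by $\cC$. Equivalently, one must identify each fibre product $\cA^J \times_\sT \cA^K$ with the corresponding piece of the diagram, a disjoint union of charts $\cA^L$ indexed by the common refinements $J \hookrightarrow L \hookleftarrow K$ in $\cC$: given a $J$-shaped and a $K$-shaped chain that induce isomorphic configurations, one must show they differ only by insertions of invertible steps, with the invertible values absorbed into the $\bG_m$-automorphisms, so that the fibre product decomposes into pieces each isomorphic to a single such $\cA^L$. The subtlety is entirely combinatorial-geometric, and pinning down the precise indexing (in particular the variance of the refinements) is the crux. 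Granting this, the family $\bigsqcup_J \cA^J \to \sT$ is an \'etale atlas whose associated equivalence relation $\bigsqcup_{J,K} \cA^J \times_\sT \cA^K \rightrightarrows \bigsqcup_J \cA^J$ is precisely the one whose stacky quotient is $\varinjlim_\cC \cA^J$; since $\Phi$ is then \'etale, surjective, and an isomorphism on this relation, it is an equivalence. Alternatively, one could deduce the whole statement from the identification of $\sT$ with an open substack of Olsson's stack of log structures, for which these charts are the standard monoidal ones, but the line-bundle argument above is self-contained.
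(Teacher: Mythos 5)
First, a point of reference: the paper does not prove this proposition at all --- it is imported directly from \cite[Prop 8.3.1]{ExpDegPairs} --- so there is no internal argument to compare yours against, and any proof you supply is necessarily your own route. Your strategy is a sensible one for the configuration-of-line-bundles model: the \'etaleness half, checked via the infinitesimal criterion over Artinian local rings (where $E$ becomes a constant sheaf and everything reduces to tracking which sections are units and absorbing their rescalings into the $\bG_m$-automorphisms), is essentially correct as sketched, and the \'etale-local construction of a chart $S \to \cA^{[n]}$ inducing a given configuration is also fine.

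The problem is the step you explicitly defer (``Granting this''): the identification of $\cA^J \times_\sT \cA^K$, which is where the entire content of the equivalence is concentrated, is both unproven and mis-stated. The fibre product is \emph{not} a disjoint union of charts indexed by cospans $J \hookrightarrow L \hookleftarrow K$. For $J=K=\emptyset$ that recipe yields infinitely many copies of $\Spec k$ (one for each $L$), whereas $\Spec k \times_\sT \Spec k \cong \Spec k$ because the generic configuration has trivial automorphism group. For $J=K=[1]$ one checks that $\cA^{[1]}\times_\sT\cA^{[1]}$ is exactly the diagonal copy of $\cA^{[1]}$: the locus that the cospan $[1]\hookrightarrow[2]\hookleftarrow[1]$ with disjoint images would contribute (namely the point where both sections are invertible) already lies on the diagonal, so the pieces are neither disjoint from one another nor full copies of $\cA^L$. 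The correct target statement is that $\cA^J\times_\sT\cA^K$ is the \emph{colimit over the category of cospans} of the intersections of the images of $\cA^J$ and $\cA^K$ inside $\cA^L$ (each such intersection being a copy of $\cA^{J\times_L K}$, not of $\cA^L$), and proving that this colimit maps isomorphically onto the fibre product --- together with the verification that the $2$-colimit over the non-filtered category $\cC$ really is presented by the groupoid you write down --- is precisely the substance of the proposition. Until that is done, with the indexing corrected, the proof of the equivalence is incomplete at its crux.
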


Setting
\[
\sT^{\leq d} := \varinjlim_{\cC_{\leq d}} \cA^J \incl \sT ,
\]
we see that the collection $\{\sT^{\leq d}\}_k$ forms a good filtration of $\sT$.
The following observation will be useful:
$\sT^{\leq d}$ possesses a unique closed point with stabilizer $B\bG_m^d$,
whose complement is $\sT^{\leq d-1}$. It pulls back to the image of the origin in
$\cA^J$ for $\vert J \vert = d$. We will sometimes call a geometric point of $\sT$ over this point
an \emph{accordion of length $d$}, according to the geometric picture in $\fM_{0,3}^{ss}$:
it is a nodal curve which is a chain of $d+1$ projective spaces with one marking
on one end two markings on the other end, and the action of the automorphism group
$\bG_m^d$ looks like an accordion being played.
\section{Cycle group calculations}
\subsection{\texorpdfstring{The Chow ring of $\sT$}{The Chow ring of T}}

We will prove the following result.
\begin{theorem}
\label{thm.main}
Let $\cX$ be an algebraic stack, smooth over $k$ and
admitting a good filtration, which has a stratification by quotient stacks.

\begin{enumerate}
\item
The natural morphism
\begin{equation}
\Chowbullet (\cX\times \sT^{\leq d}) =
\Chowbullet (\varinjlim_{\cC_{\leq d}}\cX\times\cA^n) \xrightarrow{}
\varprojlim_{\cC_{\leq d}} \Chowbullet (\cX\times\cA^n) =
\Chowbullet(\cX) \otimes \QSym_d
\end{equation}
induced (via functoriality with regard to \'{e}tale morphisms)
by the colimit \eqref{eq.injlimT} is an isomorphism for all $d$.
\item
The natural morphism
\begin{equation}
\Chowbullet (\cX\times\sT) =
\Chowbullet (\varinjlim_\cC \cX\times\cA^n) \xrightarrow{}
\varprojlim_\cC \Chowbullet (\cX\times\cA^n) =
\Chowbullet(\cX)\otimes\QSym
\end{equation}
is an isomorphism.
\end{enumerate}
In particular, the stack $\sT$ has the Chow K\"unneth property, and
$\Chowbullet (\sT) \cong \QSym.$
\end{theorem}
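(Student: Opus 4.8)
The plan is to prove part (1) by a careful analysis of the colimit-to-limit comparison map in each fixed codimension, then deduce part (2) and the final statements as formal consequences. The key structural input is the identification of $\sT^{\leq d}$ as the colimit $\varinjlim_{\cC_{\leq d}} \cA^J$ along \'etale open embeddings, together with the good filtration whose strata are governed by the closed points with stabilizer $B\bG_m^e$ for $e \leq d$. Since $\cX$ and each $\cA^n$ have the Chow K\"unneth property (shown for $B\bG_m$, hence for $\cA^n$, in the excerpt), I would first reduce to the case $\cX = \mathrm{Spec}\,k$; the general case then follows by tensoring with $\Chowbullet(\cX)$, because the K\"unneth isomorphism commutes with the relevant (co)limits in each fixed degree. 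Concretely, $\Chowbullet(\cX \times \cA^J) \cong \Chowbullet(\cX) \otimes \bZ[\alpha_s]_{s \in J}$, and the transition maps in the diagram are exactly $\mathrm{id}_{\Chowbullet(\cX)} \otimes g_\phi$ for the order-preserving injections $\phi$.

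First I would establish that the comparison map lands in the correct target and is in fact the one described combinatorially. The colimit \eqref{eq.injlimT} presents $\sT^{\leq d}$ by \'etale charts $\cA^J \to \sT^{\leq d}$; by functoriality of Chow cohomology for \'etale morphisms, pulling back gives a compatible family of ring homomorphisms $\Chowbullet(\sT^{\leq d}) \to \Chowbullet(\cA^J) = \bZ[\alpha_s]_{s \in J}$, hence a map to the inverse limit $\varprojlim_{\cC_{\leq d}} \bZ[\alpha_s]_{s\in S}$, which by the remark following Proposition \ref{prop.qsym-limit} is precisely $\QSym_d$. The heart of the argument is to show this map is an isomorphism, and here I would work degree by degree using the good filtration. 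In a fixed codimension $c$, the Chow group $\Chowbullet(\sT^{\leq d})$ is computed on the finite-type stratum $\sT^{\leq c}$ (or a bounded piece thereof), so I would set up the localization/excision sequence associated to the closed stratum $\sT^{\leq e} \setminus \sT^{\leq e-1}$, whose generic point carries the group $B\bG_m^e$. The normal-bundle computation of Lemma \ref{lem_NorBunBGm}, giving top Chern class $\alpha_1 \cdots \alpha_e$, controls the Gysin pushforward from each stratum and matches the combinatorial relation that a monomial $M_I$ of length $\ell(I) = e$ is the one newly visible at the $B\bG_m^e$-stratum. Chasing the excision sequences inductively in $d$ shows the map is surjective (every monomial class $M_I$ with $\ell(I) \leq d$ is hit) and injective (no new relations appear), yielding part (1).

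For part (2), I would pass to the limit over $\cC$: since $\{\sT^{\leq d}\}$ is a good filtration, $\Chowbullet(\sT) = \varprojlim_d \Chowbullet(\sT^{\leq d})$ in each fixed degree, and the right-hand side is $\varprojlim_d (\Chowbullet(\cX) \otimes \QSym_d) = \Chowbullet(\cX) \otimes \QSym$ by Proposition \ref{prop.qsym-limit} (the tensor with $\Chowbullet(\cX)$ survives the limit because in each fixed codimension only finitely many stages contribute). Taking $\cX = \mathrm{Spec}\,k$ gives $\Chowbullet(\sT) \cong \QSym$, and the isomorphism for general $\cX$ is exactly the assertion that $\sT$ has the Chow K\"unneth property. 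The step I expect to be the main obstacle is verifying that the comparison map from the \emph{colimit} of stacks to the \emph{limit} of Chow rings is an isomorphism and not merely well-defined: a priori $\Chowbullet$ need not send \'etale colimits of stacks to limits of rings, so I must genuinely exploit the finite-type good filtration to reduce each degree to a finite computation, and then show the excision sequences degenerate (the Gysin maps are the expected injections with the predicted cokernels) so that surjectivity and injectivity hold simultaneously. The compatibility of the stratum-by-stratum classes with the monomial basis $M_I$ — i.e.\ that the class appearing at the $B\bG_m^e$ stratum restricts correctly under all the charts $\cA^J$ and thereby equals the quasisymmetric monomial — is the delicate bookkeeping on which the whole identification rests.
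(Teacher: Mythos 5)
Your outline follows the paper's proof in all essentials: induction on $d$, the excision sequence comparing the pair $(\cA^d, \cA^d\setminus B\bG_m^d)$ with $(\sT^{\leq d}, \sT^{\leq d-1})$, Lemma \ref{lem_NorBunBGm} to make the Gysin map injective, the colimit description to control compatibility among the charts, and Proposition \ref{prop.qsym-limit} to pass to part (2). The one step that would fail as written is the opening reduction to $\cX = \Spec k$: you cannot deduce the general case ``by tensoring with $\Chowbullet(\cX)$'', because the assertion that $\Chowbullet(\cX \times \sT^{\leq d}) \cong \Chowbullet(\cX) \otimes \Chowbullet(\sT^{\leq d})$ is precisely the Chow K\"unneth property of $\sT^{\leq d}$, which is part of the conclusion and does not follow formally from the case of a point. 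The repair is what the paper does (and what the rest of your outline implicitly does anyway): carry $B := \Chowbullet(\cX)$ as a coefficient ring through the entire induction, using the already-established K\"unneth property of $B\bG_m^d$ and $\cA^d$ so that every term of the excision sequence has the form $B \otimes (-)$ and the transition maps are $\id_B \otimes g_\phi$.

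Two pieces of bookkeeping that you flag but do not supply, and that the paper makes explicit, are worth naming. First, injectivity of $\Chowbullet(\sT^{\leq d}_\cX) \to \Chowbullet(\cA^d_\cX)$ is obtained by first showing that the restriction $\Chowbullet(\sT^{\leq d-1}_\cX) \to \Chowbullet((\cA^d\setminus B\bG_m^d)_\cX)$ is injective --- this uses the compatibility of the charts $\psi_i \colon \cA^{d-1}_\cX \to \cA^d_\cX$ with $(p_{d-1})_\cX$ together with the induction hypothesis --- and then applying the five lemma to the two excision sequences. Second, pinning down the image is not a bare surjectivity chase: the paper invokes the graded fiber-product lemma of Vezzosi--Vistoli (Lemma \ref{lem.vezzvist}) to identify $\Chowbullet(\sT^{\leq d}_\cX)$ with $B \otimes \bigl(\bZ[\alpha_1,\dots,\alpha_d] \times_{\bZ[\alpha_1,\dots,\alpha_d]/(\alpha_1\cdots\alpha_d)} \QSym_{d-1}\bigr)$ and then matches this fiber product with $B \otimes \QSym_d$. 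Your phrase about the Gysin maps being ``the expected injections with the predicted cokernels'' is the right idea, but the fiber-product lemma is the device that converts it into an identification of rings rather than merely of graded groups.
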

\begin{remark}
If $\cX$ admits a good filtration, but is not smooth, Theorem \ref{thm.main} is still true,
using almost the same proof, for the Chow groups without ring structure.
\end{remark}
\begin{proof}
First we note that the second part follows from the first. To see this, note
that the complement of $\sT^{\leq d}$ has codimension $d+1$, hence for any $d'\leq d$,
\[
\Chow^{d'} (\sT \times \cX) \cong \Chow^{d'} (\sT^{\leq d} \times \cX)
\]
by excision. To prove the first part, we fix notation by setting $B:=\Chowbullet(\cX)$.
Also, for any algebraic stack $\fY$, let
$\fY_\cX:=\cX\times\fY$,
and for morphisms $f:\fY\to\fZ$, let
$f_\cX:=\id\times f: \fY_\cX \to \fZ_\cX$.

We prove the statement by induction on $d$.
More precisely, we prove by induction that the morphism
$\Chowbullet (\sT^{\leq d}_\cX)\to \Chowbullet (\cA^d_\cX)$
is injective, with image equal to $B\otimes\QSym_d$.
The case $d=0$ is obvious. Suppose the theorem has been proven for $d-1$.
By Lemma \ref{lem_NorBunBGm}, the map
$\Chowbullet ((B\bG_m^d)_\cX) \to \Chow^{\bullet+d} (\cA^d_\cX)$
induced by inclusion is injective.
Since $(p_d)_\cX$ is \'{e}tale, the same holds for
$\Chowbullet ((B\bG_m^d)_\cX) \to \Chow^{\bullet+d} (\sT^{\leq d}_\cX)$.
We will determine its image. By excision, the following commutative diagram is exact.
\begin{equation}
\label{dia.2shortexact_2}
\begin{tikzcd}[column sep=small]
0 \arrow{r} & \Chowbullet ((B\bG_m^d)_\cX) \arrow{r} &
\Chow^{\bullet+d} (\cA^d_\cX) \arrow{r}&
\Chow^{\bullet+d} ((\cA^d \setminus B\bG_m^d)_\cX) \arrow{r} & 0 \\
0 \arrow{r} & \Chowbullet ((B\bG_m^d)_\cX) \arrow{r} \arrow{u}{(id_{B\bG_m^d})_\cX^*} &
\Chow^{\bullet+d} (\sT^{\leq d}_\cX) \arrow{r} \arrow{u}{(p_d)_\cX^*} &
\Chow^{\bullet+d} (\sT^{\leq d-1}_\cX) \arrow{r} \arrow{u}{(p_d|_{(\cA^d \setminus B\bG_m^d)})_\cX^*}& 0
\end{tikzcd}
\end{equation}
Let $\psi_i : \cA^{d-1}_\cX \to \cA^{d}_\cX$ induced by $j^{d-1}_i$.
Consider the following (not necessarily commutative) diagram.
\begin{equation}
\begin{tikzcd}
\label{dia_commut_2}
\Chowbullet ((\cA^d \setminus B\bG_m^d)_\cX) \arrow{dr}[swap]{\psi_i^*} \arrow{drr}{\psi_j^*} &{} &{} \\
{} & \Chowbullet (\cA^{d-1}_\cX) \arrow{r}{\sim} & \Chowbullet (\cA^{d-1}_\cX) \\
\Chowbullet (\sT^{\leq d-1}_\cX) \arrow{uu}{(p_d|_{(\cA^d \setminus B\bG_m^d)})_\cX^*} \arrow{ur}{(p_{d-1})_\cX^*} \arrow{urr}[swap]{(p_{d-1})_\cX^*} &{} &{}
\end{tikzcd}
\end{equation}
We take the horizontal map $h$ to be induced by the order-preserving bijection
$\{1,\dots, \hat{j}, \dots,d\} \to \{1,\dots, \hat{i}, \dots,d\}$.
Then, because $\sT^{d-1}_\cX$ is a colimit over $\cC_{\leq d-1}$,
\begin{equation}
\label{eq_compat_rel_2}
h \circ \psi_i^* \circ (p_d|_{(\cA^d \setminus B\bG_m^d)})_\cX^*
=(p_{d-1})_\cX^*=\psi_j^* \circ (p_d|_{(\cA^d \setminus B\bG_m^d)})_\cX^*.
\end{equation}
Since $(p_{d-1})_\cX^*$ is injective by induction hypothesis,
so is $(p_d|_{(\cA^d \setminus B\bG_m^d)})_\cX^*$.
By the five lemma, applied to \eqref{dia.2shortexact_2},
we deduce that $(p_d)_\cX^*$ is injective.
First, fill in the known (by the induction hypothesis) terms in \eqref{dia.2shortexact_2}.
\begin{equation}
\begin{tikzcd}[column sep=small]
0 \arrow{r} & B\otimes\bZ [\alpha_i]_{i=1}^d \arrow{r}{\cdot \alpha_1 \cdots \alpha_d} &
B\otimes\bZ [\alpha_i]_{i=1}^d \arrow{r}&
B\otimes\bZ [\alpha_i]_{i=1}^d/(\alpha_1 \cdots \alpha_d) \arrow{r} & 0 \\
0 \arrow{r} & B\otimes\bZ [\alpha_i]_{i=1}^d \arrow{r} \arrow{u}{id} &
\Chowbullet (\sT^{\leq d}_\cX) \arrow{r} \arrow{u}{(p_d)_\cX^*} &
B\otimes\QSym_{d-1} \arrow{r} \arrow{u}{(p_d|_{(\cA^d \setminus B\bG_m^d)})_\cX^*}& 0
\end{tikzcd}
\end{equation}
Likewise, we fill in the known groups in \eqref{dia_commut_2}.
\begin{equation}
\begin{tikzcd}
B\otimes\bZ [\alpha_i]_{i=1}^d/(\alpha_1 \cdots \alpha_d) \arrow{dr}[swap]{\alpha_i\mapsto 0} \arrow{drr}{\alpha_j\mapsto 0} &{} &{} \\
{} & B\otimes\bZ [\alpha_\ell]_{\ell=1,\ell\neq i}^n \arrow{r}{\sim} & B\otimes\bZ [\alpha_\ell]_{\ell=1,\ell\neq j}^n \\
B\otimes\QSym_{d-1} \arrow{uu}{(p_d|_{(\cA^d \setminus B\bG_m^d)})_\cX^*} \arrow{ur}{(p_{d-1})_\cX^*} \arrow{urr}[swap]{(p_{d-1})_\cX^*} &{} &{}
\end{tikzcd}
\end{equation}
To conclude, we apply the induction hypothesis and Lemma \ref{lem.vezzvist}.
The latter implies that the image of $\Chowbullet (\sT^{\leq d}_\cX)$ 
in $B\otimes \bZ[\alpha_1, \dots, \alpha_d]$ under $(p_d)_\cX^*$
can be identified with
\[
B \otimes \left(
\bZ[\alpha_1, \dots, \alpha_d]
\times_{\bZ[\alpha_1, \dots, \alpha_d] / (\alpha_1 \cdots \alpha_d)}
\QSym_{d-1} \right).
\]
By the induction hypothesis, the morphism $(p_{d-1})_\cX^*$ is the inclusion
of $B\otimes\QSym_{d-1}$ into a polynomial ring over $B$ with $d-1$ ordered variables.
Hence, by \eqref{eq_compat_rel_2}, the image of $B\otimes\QSym_{d-1}$ in
$B\otimes\bZ [\alpha_i]_{i=1}^d/(\alpha_1 \cdots \alpha_d)$
is exactly equal to (the image in $B\otimes\bZ [\alpha_i]_{i=1}^d/(\alpha_1 \cdots \alpha_d)$ of)
those $f\in B\otimes \QSym_d$ such that $f\notin (\alpha_1 \cdots \alpha_d)$.
By the commutativity of \eqref{dia.2shortexact_2}, this implies that the image of
$(p_d)_\cX^*$ contains a representative mod $(\alpha_1 \cdots \alpha_d)$
of every $f\in B\otimes \QSym_d$ such that $f\notin (\alpha_1 \cdots \alpha_d)$,
or to say it differently, it contains a representative mod $(\alpha_1 \cdots \alpha_d)$
of every quasi-symmetric polynomial with $B$-coefficients that has weight less than $d$.
Furthermore, it also contains $(\alpha_1, \dots, \alpha_d) \subset \QSym_d$ (from the
left-hand side of the fiber product).
Via a direct computation using the colimit description and functoriality,
it is immediate that any polynomial in the image must be quasisymmetric, and we have
shown that the image contains all quasisymmetric polynomials. This concludes the proof.
\end{proof}

\begin{lemma}[{\cite[Lemma 4.4]{VezzosiVistoliHigherAlgebraic}}, graded variant]
\label{lem.vezzvist}
Let $A$, $B$, and $C$ be graded rings, and let $f: B \to A$ and let $g: B \to C$ be morphisms.
Suppose that there exists a homomorphism of abelian groups $\phi: A \to B$ such that:
\begin{enumerate}
\item The sequence
\[
0 \to A \xrightarrow{\phi} B \xrightarrow{g} C \to 0
\]
is exact;
\item the composition $f \circ \phi: A \to A$ is the multiplication by an
element $a \in A$ of pure degree which is not a zero-divisor.
\end{enumerate}
Then $f$ and $g$ induce an isomorphism of graded rings
\[
(f, g): B \to A \times_{A/(a)} C,
\]
where $A \to A/(a)$ is the projection and $C \to A/(a)$ is induced by 
$C \cong B / \im (\phi) \to A / \im(f \circ \phi)=A/(a)$.\qed
\end{lemma}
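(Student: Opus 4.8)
The plan is to show that the graded ring homomorphism $(f,g)\colon B \to A\times_{A/(a)} C$ is a bijection by an elementary diagram chase, after first checking that it is well defined. Since $f$ and $g$ are ring homomorphisms, so is the pairing $(f,g)\colon B \to A\times C$, and it is degree-preserving because $f$ and $g$ are; the fiber product $A\times_{A/(a)}C$ is a graded subring of $A\times C$. To see that $(f,g)$ factors through the fiber product, I unwind the definition of the map $C\to A/(a)$: it sends $g(b)$, viewed via $C\cong B/\im(\phi)$ as the class of $b$, to $f(b)\bmod \im(f\circ\phi)=f(b)\bmod (a)$, which is exactly the image of $f(b)$ under the projection $A\to A/(a)$. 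Hence $(f(b),g(b))$ always lies in the fiber product, and it remains to prove that $(f,g)$ is bijective onto it; being a degree-preserving bijective ring map, it is then automatically an isomorphism of graded rings.

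For injectivity, suppose $f(b)=0$ and $g(b)=0$. Exactness of $0\to A\xrightarrow{\phi}B\xrightarrow{g}C\to 0$ gives $b=\phi(x)$ for some $x\in A$, and then $0=f(b)=f(\phi(x))=a\cdot x$. Since $a$ is not a zero-divisor this forces $x=0$, so $b=0$. This is the only place the non-zero-divisor hypothesis is used.

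For surjectivity, take $(y,z)$ in the fiber product, so that $y$ and $z$ have the same image in $A/(a)$. Because $g$ is surjective, I choose $b_0\in B$ with $g(b_0)=z$; by the compatibility just checked, $f(b_0)$ and $z$, hence $f(b_0)$ and $y$, agree modulo $(a)$, so $y-f(b_0)=a\cdot w$ for some $w\in A$. Correcting by $\phi$, set $b:=b_0+\phi(w)$. Then $g(b)=g(b_0)+g(\phi(w))=z+0=z$ since $\im(\phi)=\ker(g)$, while $f(b)=f(b_0)+f(\phi(w))=f(b_0)+a\cdot w=y$. Thus $(f,g)(b)=(y,z)$, as desired.

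I expect no serious obstacle here: the argument is the ungraded statement of \cite[Lemma 4.4]{VezzosiVistoliHigherAlgebraic} read off element by element. The only point that needs a second look is the bookkeeping in the graded setting, where $\phi$ raises degrees by $\deg(a)$ while $f$ and $g$ preserve them; but since the correction term $\phi(w)$ lands in the correct graded piece whenever $(y,z)$ is homogeneous, the construction respects the grading and the whole verification may be carried out degree by degree. Confirming the well-definedness of $C\to A/(a)$ through the identification $C\cong B/\im(\phi)$ is the one spot where I would be careful to invoke the exactness hypothesis correctly.
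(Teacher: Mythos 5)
Your proof is correct: the well-definedness check, the injectivity via the non-zero-divisor hypothesis, and the surjectivity via the correction term $\phi(w)$ are exactly the element-by-element argument of the cited \cite[Lemma 4.4]{VezzosiVistoliHigherAlgebraic}, and your remark that $\phi$ shifts degrees by $\deg(a)$ while everything else is degree-preserving is the only point needed for the graded variant. The paper itself states the lemma without proof (deferring to the citation), so there is no divergence of approach to report.
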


\begin{remark}
The geometric meaning of most of the classes in $\Chowbullet(\sT)$ is
somewhat mysterious to the author. The subring of \emph{symmetric} functions
is the ring generated by closed substacks: it follows from the previous calculation that
the class of $\sT^{\geq d}$ is $M_I$, where
\[
I = \underbrace{(1, \dots, 1)}_{d \text{ times}}.
\]
In other words, it is the $d$-th elementary symmetric function.
On the other hand, consider a generator $L$ of the Picard group of $\cA^1$.
The unique (up to isomorphism) line bundle on $\sT$ whose restriction to the open substack
$\cA^1 \subset \sT$ is isomorphic to $L$ has first Chern class $\pm M_{(1)}$.
Hence, we readily obtain powers of $M_{(1)}$ as Chern classes of vector bundles, for
example
\[
M_{(1)}^2 = M_{(2)} + 2 M_{(1,1)}.
\]
However, it is not clear how we can naturally produce non-symmetric classes
such as $M_{(1,2)}$.
\end{remark}

\subsection{Hopf algebra structure}
We construct a morphism $\mu: \sT \times \sT \to \sT$ that
gives rise to the comultiplication of the Hopf algebra. Given two pairs
$(E_1, L_1)$ of $(E_2, L_2)$ of a sheaf of totally ordered sets and a diagram of
line bundles indexed by that sheaf, we define $E$ as the sheaf of partially ordered
sets that arises as the sheafification of the presheaf that identifies the
unique minimal element of $E_2$ with the highest element of $E_1$. Then
\[
L: E \to \mathfrak{Pic}
\]
is the morphism which takes a section $x$ to $L_1(x)$ if $x$ is a section of $E_1$,
and to $L_2(x) \otimes \tilde{L}$ if $x$ is a section of $E_2$, where $\tilde{L}$ is
the image under $L_1$ of the local maximum of $E_1$.

Informally, we associate to two sequences of line bundles and morphisms
\[
L_{1,n_1} \to \dots \to L_{1,1} \to \cO \text{ and }
L_{2,n_2} \to \dots \to L_{2,1} \to \cO
\]
the sequence
\begin{align*}
L_{2,n_2} \otimes L_{1,n_1} \to L_{2,n_2-1} \otimes L_{1,n_1} \to
\dots \to L_{2,1} \otimes L_{1,n_1} \\
\to L_{1,n_1} \to L_{1,n_1 -1 } \to \dots \to L_{1,1} \to \cO.
\end{align*}

For $n = n_1 + n_2$, we can identify
$\Psi_{n_1, n_2}: \cA^{n_1} \times \cA^{n_2} \xrightarrow{\simeq} \cA^n$
under the obvious isomorphism preserving the order of the coordinates.
Since the former form an \'etale cover of $\sT \times \sT$ and the latter form an \'etale cover
of $\sT$, it will be important so understand how they interact.
\begin{proposition}
Let $n, n_1, n_2, \Psi_{n_1, n_2}$ as above.
There is an isomorphism
\[
\mu \circ (p_{n_1} \times p_{n_2}) \simeq p_n \circ \Psi_{n_1, n_2}.
\]
\end{proposition}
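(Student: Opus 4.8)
The plan is to produce the $2$-isomorphism by comparing the two objects of $\sT$ that the composites classify over the common base $\cA^{n_1}\times\cA^{n_2}$, using $\Psi_{n_1,n_2}$ to identify the target $\cA^n$. A morphism to $\sT$ is the same datum as an object of $\sT$ over the source, and a $2$-isomorphism of morphisms is precisely an isomorphism of the corresponding objects; so it suffices to write out the two diagrams of line bundles explicitly and exhibit a canonical isomorphism between them in $\sT$, compatible with the structure maps and fixing $L(0)$.

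First I would compute $p_n\circ\Psi_{n_1,n_2}$. Writing the universal families as $((\cL_i,s_i))_{i=1}^{n_1}$ and $((\cL'_j,s'_j))_{j=1}^{n_2}$, under $\Psi_{n_1,n_2}$ they concatenate, in that order, to the universal family on $\cA^n$. Applying the recipe defining $p_n$, the resulting chain has lower entries $\cL_1^\vee\otimes\cdots\otimes\cL_k^\vee$ for $0\le k\le n_1$ and upper entries $\cL_1^\vee\otimes\cdots\otimes\cL_{n_1}^\vee\otimes(\cL'_1)^\vee\otimes\cdots\otimes(\cL'_j)^\vee$ for $1\le j\le n_2$, with connecting maps induced by the sections $s_k$ and $s'_j$, and with $E$ the quotient of the constant sheaf on $\widetilde{[n]}$ collapsing $m\sim m+1$ on the locus where the $(m+1)$-st section is nonzero.

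Next I would compute $\mu\circ(p_{n_1}\times p_{n_2})$. The two factors produce the chains with entries $\cL_1^\vee\otimes\cdots\otimes\cL_k^\vee$ and $(\cL'_1)^\vee\otimes\cdots\otimes(\cL'_j)^\vee$, the first having maximum $\tilde L=\cL_1^\vee\otimes\cdots\otimes\cL_{n_1}^\vee$. By the definition of $\mu$, the glued chain keeps the first chain below and places the second, tensored by $\tilde L$, above, so its upper entries are $(\cL'_1)^\vee\otimes\cdots\otimes(\cL'_j)^\vee\otimes\tilde L$. Comparing with the previous paragraph, the lower parts agree on the nose, and the upper entries differ only by the order of the two blocks of tensor factors. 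The canonical commutativity isomorphism swapping these blocks is compatible with the connecting maps (both are induced by the same sections, tensored on opposite sides) and fixes the bottom entry $\cO$, so the swaps assemble into an isomorphism of diagrams of line bundles, i.e.\ a morphism in $\sT$.

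The step I expect to be the \emph{main obstacle} is matching the two underlying sheaves of totally ordered sets $E$, especially at the junction index $n_1$. In the $\mu$ construction the node comes from the sheafification identifying the top of $E_1$ with the minimum of $E_2$, whereas in $p_n\circ\Psi$ it arises from collapsing $n_1\sim n_1+1$ where $s'_1$ is nonzero. I would reconcile these by invoking the reducedness conditions (1) and (2) of Definition \ref{def.T-def}: the connecting map at the junction is $s'_1$ tensored with $\tilde L$, which is an isomorphism exactly on the locus where $s'_1$ is a unit, and this is precisely the locus on which both constructions collapse the two adjacent elements. Once the sheaves $E$ are identified in this way, the block-swap isomorphisms above furnish the desired $2$-isomorphism, and the remaining checks (compatibility with the étale structure maps and with the requirement that morphisms of $\sT$ be the identity on $L(0)=\cO$) are routine bookkeeping.
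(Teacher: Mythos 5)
Your proposal is correct and follows the same route as the paper, which simply declares the statement ``clear by the definition of $\mu$''; you have written out the unwinding of both composites, the block-swap isomorphism of tensor factors, and the identification of the underlying sheaves $E$ at the junction that the paper leaves implicit. Nothing in your argument diverges from what the paper intends.
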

\begin{proof}
This is clear by the definition of $\mu$.
\end{proof}

\begin{lemma}
The morphism $\mu$ defined by the previous construction is representable \'etale.
\end{lemma}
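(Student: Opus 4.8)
The plan is to treat the two assertions separately: that $\mu$ is representable by algebraic spaces, and that it is étale. The first is the conceptual heart and uses the rigidification built into the definition of $\sT$, while the second follows formally from the chart description $\mu\circ(p_{n_1}\times p_{n_2})\simeq p_n\circ\Psi_{n_1,n_2}$ established in the preceding proposition.

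To get representability I would verify that $\mu$ is \emph{faithful}, invoking the standard criterion that a morphism of algebraic stacks is representable by algebraic spaces exactly when it induces a monomorphism on automorphism sheaves. So fix a scheme $S$, an object $((E_1,L_1),(E_2,L_2))$ of $(\sT\times\sT)(S)$ with image $(E,L)=\mu((E_1,L_1),(E_2,L_2))$, and an automorphism $(\phi_1,\phi_2)$ whose image $\Phi$ on $(E,L)$ is the identity. Restricting the glued configuration to the sub-configuration indexed by $E_1$ recovers $(E_1,L_1)$ together with $\phi_1$, since $L|_{E_1}=L_1$; triviality of $\Phi$ thus forces $\phi_1=\id$, and in particular the induced automorphism of the top bundle $\tilde L=L_1(\max E_1)$ is trivial. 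On the second block $L|_{E_2}=L_2\otimes\tilde L$, and the gluing is consistent because morphisms in $\sT$ fix $L_2(0)=\cO$, so the component of $\Phi$ there is $\phi_2\otimes\id_{\tilde L}=\phi_2$; hence $\phi_2=\id$ as well. Therefore $\mu$ is faithful, and so representable by algebraic spaces.

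With representability secured, étaleness may be checked étale-locally on the target, so I would use the étale cover $\coprod_n p_n\colon\coprod_n\cA^n\to\sT$ and reduce to showing each base change $(\sT\times\sT)\times_\sT\cA^n\to\cA^n$ is étale. Pulling back the étale cover $\coprod_{n_1,n_2}(p_{n_1}\times p_{n_2})$ of the source and using $\mu\circ(p_{n_1}\times p_{n_2})\simeq p_{n_1+n_2}\circ\Psi_{n_1,n_2}$ with $\Psi_{n_1,n_2}$ an isomorphism, each pulled-back piece is identified with the transition space $\cA^{n_1+n_2}\times_\sT\cA^n$. As the charts $p_m\colon\cA^m\to\sT$ are themselves representable étale, each such transition space is representable and étale over $\cA^n$; these pieces form an étale cover of $(\sT\times\sT)\times_\sT\cA^n$, so the composite is étale over $\cA^n$, and since étaleness descends along an étale surjective cover of the source the map $(\sT\times\sT)\times_\sT\cA^n\to\cA^n$ is étale. Étale descent on the target then yields that $\mu$ is étale. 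The main obstacle, and the only nonformal step, is the faithfulness computation: one must check that concatenation introduces no new relative automorphisms, and this is precisely where the defining conditions of $\sT$ are indispensable, since fixing $L(0)=\cO$ rigidifies both factors so that triviality of the glued automorphism pins down $\phi_1$ on the first block and, through the twist by $\tilde L$, then $\phi_2$ on the second. Everything after that is bookkeeping with the colimit $\varinjlim_\cC\cA^J\simeq\sT$ and the fact that its structure maps $p_n$ are representable étale.
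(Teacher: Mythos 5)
Your proposal is correct and follows essentially the same route as the paper: representability is obtained from injectivity of $\mu$ on automorphism groups, and \'etaleness is checked on the atlas using $\mu\circ(p_{n_1}\times p_{n_2})\simeq p_n\circ\Psi_{n_1,n_2}$, identifying the relevant base change with $\cA^n\times_\sT\cA^n$, which is \'etale over $\cA^n$ because $p_n$ is. The only (harmless) difference is that you verify faithfulness by a direct computation on arbitrary objects, whereas the paper argues more briefly that the map on stabilizers of accordions ($\bG_m^{d_1}\times\bG_m^{d_2}\to\bG_m^{d_1+d_2}$) is injective; your version is if anything more complete.
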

\begin{proof}
Since the stabilizer groups of an accordion of length $d$ is $\bG_m^d$ and the
morphism maps a pair of accordions of length $d_1$ and $d_2$ to an accordion
of length $d_1+d_2$, the morphism is stabilizer-preserving, hence representable.
For representable morphisms, we can check the property of being \'etale on an atlas
for the target. Since the property of being \'etale is \'etale local on the source,
it is enough to show that
\[
\cA^{n_1} \times \cA^{n_2} \to \sT^{\leq n}
\]
is \'etale whenever $n_1 + n_2 \leq n$, and for that it is enough to show that it is
\'etale in the case $n_1 + n_2 = n$. After base change by $\cA^n \to \sT^{\leq n}$,
we have to show that the $2$-fiber product
\[
(\cA^{n_1} \times \cA^{n_2}) \times_\sT \cA^n
\]
is \'etale over $\cA^n$, but this is just the diagonal of $\cA^n \to \sT$.
\end{proof}
\begin{remark}
One should note that while $\mu$ is \'etale and quasi-finite, it is not separated, since it
admits sections. For example, we have the embeddings
$\sT \cong \sT \times \sT^{\leq 0} \incl \sT \times \sT$ and
$\sT \cong \sT^{\leq 0} \times \sT \incl \sT \times \sT$,
both of which are sections of $\mu$.
\end{remark}
\begin{remark}
As suggested by the notation, the morphism $\mu$ is the multiplication morphism
exhibiting $(\sT, \mu, \Spec (k) \incl \sT) $ as a monoid object in the $2$-category
of algebraic stacks.
\end{remark}

\begin{theorem}
The comultiplication $\Delta: \QSym \to \QSym \otimes \QSym$ is equal to the pullback of cycle classes $\mu^*$.
\end{theorem}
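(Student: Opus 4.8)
The plan is to verify the identity $\mu^* = \Delta$ after pulling back to the \'etale charts $\cA^{n_1}\times\cA^{n_2}$, where both sides become explicit polynomials. By Theorem~\ref{thm.main} the stack $\sT$ has the Chow K\"unneth property and $\Chowbullet(\sT)\cong\QSym$, so $\Chowbullet(\sT\times\sT)\cong\QSym\otimes\QSym$; moreover, in each fixed degree this group embeds into $\bZ[\alpha_1,\dots,\alpha_{n_1}]\otimes\bZ[\beta_1,\dots,\beta_{n_2}]$ via $(p_{n_1}\times p_{n_2})^*$ once $n_1,n_2$ are large enough, since a quasisymmetric function of degree $d$ is determined by its image in $d$ variables. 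Hence it suffices to check that $(p_{n_1}\times p_{n_2})^*\mu^*$ and $(p_{n_1}\times p_{n_2})^*\Delta$ agree on the monomial basis $\{M_I\}$ for all $n_1,n_2$.

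First I would invoke the preceding proposition giving $\mu\circ(p_{n_1}\times p_{n_2})\simeq p_n\circ\Psi_{n_1,n_2}$, so that on Chow rings $(p_{n_1}\times p_{n_2})^*\mu^* = \Psi_{n_1,n_2}^*\circ p_n^*$. Under the identification of Theorem~\ref{thm.main}, the map $p_n^*$ is the restriction $\QSym\to\QSym_n$, which by Proposition~\ref{prop.qsym.restriction} sends $M_I$ to the monomial quasisymmetric polynomial $\sum_{k_1<\dots<k_\ell}\alpha_{k_1}^{I_1}\cdots\alpha_{k_\ell}^{I_\ell}$ in $\bZ[\alpha_1,\dots,\alpha_n]$ with $n=n_1+n_2$. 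Everything then reduces to computing the single isomorphism-induced map $\Psi_{n_1,n_2}^*$.

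The combinatorial heart is to pin down the coordinate correspondence of $\Psi_{n_1,n_2}$ and to show that it turns $M_I$ into the deconcatenation coproduct. From the construction of $\mu$, the glued sheaf of ordered sets places $E_1$ below $E_2$, so under the order-preserving identification $\cA^{n_1}\times\cA^{n_2}\cong\cA^n$ the first factor occupies the low coordinates: $\Psi_{n_1,n_2}^*$ sends $\alpha_i\mapsto\alpha_i$ for $i\le n_1$ and $\alpha_{n_1+j}\mapsto\beta_j$ for $1\le j\le n_2$. Because the indices $k_1<\dots<k_\ell$ in each monomial of $M_I$ are strictly increasing, the threshold $n_1$ cuts every such sequence into an initial segment $k_1<\dots<k_p\le n_1$ and a final segment $n_1<k_{p+1}<\dots<k_\ell$; summing over all splittings factors the image as $\Psi_{n_1,n_2}^*(M_I)=\sum_{p=0}^{\ell}M_{(I_1,\dots,I_p)}\otimes M_{(I_{p+1},\dots,I_\ell)}=\sum_{I=J\cdot K}M_J\otimes M_K$, which is exactly $(p_{n_1}\times p_{n_2})^*\Delta(M_I)$.

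I expect the main obstacle to be bookkeeping rather than conceptual: one must fix the orientation and dualization conventions of the diagram of line bundles carefully enough to be certain that the first $\sT$-factor, namely $E_1$ sitting at the bottom of the glued accordion, corresponds to the low-indexed coordinates and hence to the prefix $J$ rather than to the suffix. An error here would only produce the opposite coproduct $\Delta^{\mathrm{op}}$, so this orientation check is precisely what distinguishes $\Delta$ from its co-opposite. Once it is settled, the joint injectivity of the chart restrictions upgrades the monomial-basis identity to the desired equality $\mu^*=\Delta$ of maps $\QSym\to\QSym\otimes\QSym$.
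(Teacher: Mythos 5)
Your proposal is correct and follows essentially the same route as the paper: reduce to the \'etale charts via injectivity of $(p_{n_1}\times p_{n_2})^*$ in bounded degree, use the compatibility $\mu\circ(p_{n_1}\times p_{n_2})\simeq p_n\circ\Psi_{n_1,n_2}$, and compute $\Psi_{n_1,n_2}^*M_I$ by splitting each strictly increasing index sequence at the threshold $n_1$ to recover the deconcatenation coproduct. The only cosmetic difference is that the paper tracks the vanishing of terms with $\ell(J)>n_1$ or $\ell(K)>n_2$ on both sides for arbitrary $n_1,n_2$ (using all charts with $n_1+n_2=n$ jointly), whereas you take $n_1,n_2$ large enough that no terms vanish; both are valid.
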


\begin{proof}
It is enough to show that $\mu^* (M_I) = \Delta (M_I)$ for every composition $I$, where
$M_I$ is the monomial basis element indexed by $I$.
Fix $I$ and let $n = \lvert I \rvert$ be the size of $I$, which is also the degree of $M_I$.
We have established in the proof of Theorem \ref{thm.main} that the pullback of cycle classes
on $\sT$ of cohomological degree at most $n$ to $\cA^n$ is injective. The Chow ring of
$\sT \times \sT$ is naturally bigraded as a tensor product of graded algebras. Using the
same technique twice, we can see that for cycle classes of bidegree at most $(n_1, n_2)$,
the pullback to $\cA^{n_1} \times \cA^{n_2}$ is injective.

Note that the complement of the union of the images of $\cA^{n_1} \times \cA^{n_2}$
as we range over pairs $(n_1,n_2)$ with $n_1+n_2=n$ has codimension $n+1$.
This implies that for any $d \leq n$,
a class in $\Chow^d(\sT \times \sT)$ can be uniquely identified by its image in
\[
\prod_{n_1 + n_2 = n} \Chow^d(\cA^{n_1} \times \cA^{n_2})
\]
by the product of the various pullbacks, by excision(\cite[Thm. 2.1.12(iv-v)]{Kresch-cyc}).
It remains to be shown that the pullback of
$\mu^* M_I$ to $\Chow^n(\cA^{n_1} \times \cA^{n_2})$
is equal to the pullback of
\begin{equation}
\label{eq.phi-comult-formula}
\sum_{s=0}^\ell M_{(I_1, \dots, I_s)} \otimes M_{(I_{s+1}, \dots, I_\ell)}.
\end{equation}
Let $\Psi := \Psi_{n_1, n_2}$.
We use the fact that $\mu \circ (p_{n_1} \times p_{n_2})$ is isomorphic to $p_n \circ \Psi$.

Under this isomorphism, the monomial $M_I$ corresponding to a composition $I= (I_1, \dots, I_\ell)$
pulls back to the same monomial in $\QSym \otimes \QSym$, except that we regard the first $n_1$
variables as coming from the left-hand side of the tensor product,
and the remaining $n_2$ variables as coming from the right-hand side.

To make this precise, let us denote the variables in
$\Chowbullet(\cA^n), \Chowbullet(\cA^{n_1})$, and $\Chowbullet(\cA^{n_2})$ by
$\gamma_i, \alpha_i$, and $\beta_i$ respectively,
such that $\Psi^* \gamma_i = \alpha_i$ for $i \in \{1, \dots, n_1\}$ and
$\Psi^* \gamma_{n_1 + i} = \beta_i$
for $i \in \{1, \dots, n_2\}$.
Then we see that
\begin{align}
\Psi^* M_I & = \Psi^* \left( \sum_{i_1 < \dots < i_\ell} \gamma_{i_1}^{I_1} \cdots \gamma_{i_\ell}^{I_\ell} \right) \nonumber\\
& = \sum_{i_1 < \dots < i_\ell} \Psi^*(\gamma_{i_1})^{I_1} \cdots \Psi^*(\gamma_{i_\ell})^{I_\ell} \nonumber\\
& = \sum_{\substack{i_1 < \dots < i_k \leq n_1\\ n_1 < i_{k+1} < \dots < i_\ell}}
\Psi^*(\gamma_{i_1})^{I_1} \cdots \Psi^*(\gamma_{i_k})^{I_k}
\Psi^*(\gamma_{i_{k+1}})^{I_{k+1}}\cdots \Psi^*(\gamma_{i_\ell})^{I_\ell} \nonumber\\
& = \sum_{\substack{i_1 < \dots < i_k \leq n_1\\ n_1 < i_{k+1} < \dots < i_\ell}} (\alpha_{i_1} \otimes 1)^{I_1} \cdots (\alpha_{i_k} \otimes 1)^{I_k} (1 \otimes\beta_{(i_{k+1}-n_1)})^{I_{k+1}}\cdots (1 \otimes \beta_{(i_\ell - n_1)})^{I_\ell} \nonumber\\
& = \sum_{\substack{I = J \cdot K \\ J \in \Comp_{n_1} \\ K \in \Comp_{n_2}}} M_J \otimes M_K
= \sum_{\substack{I = J \cdot K \\ \ell(J) \leq n_1 \\ \ell(K) \leq n_2}} M_J \otimes M_K.
\label{eq.phi-comult-result}
\end{align}
On the other hand, the pullback of $p(\alpha_i)\otimes q(\beta_i) \in \QSym \otimes \QSym$
by $(p_{n_1} \times p_{n_2})$ is the evaluation homomorphism that sets all $\alpha_i$ to $0$
for $i > n_1$ and all $\beta_i$ to $0$ for $i > n_2$, and is the identity on the remaining variables.
Hence
\[
(p_{n_1} \times p_{n_2})^* M_J \otimes M_K =
\begin{dcases}
	M_J \otimes M_K, & \text{if } \ell(J) \leq n_1 \text{ and } \ell(K) \leq n_2 \\
	0 & \text{otherwise.}
\end{dcases}
\]
Hence, by applying the previous equation to \eqref{eq.phi-comult-formula}, we see that
\[
(p_{n_1} \times p_{n_2})^* (\Delta (M_I))
\]
is equal to the sum \eqref{eq.phi-comult-result}.
\end{proof}

\begin{remark*}
Since the structure morphism $\sT \to \Spec k$ and the inclusion of the generic point
$\Spec k \to \sT$ induce the unit and counit of $\QSym$, respectively, one could conjecture
that there could also be a morphism $\widehat{S}: \sT \to \sT$ inducing the antipode.
However, we have not been able to find a good candidate; among the reasons for that
is that if $\widehat{S}$ exists, it cannot be representable and in particular it will not be
an isomorphism, contrary to the situation for Hopf algebras. Since $\sT$ is only a
monoid, not a group, the existence of a geometric antipode seems unlikely.
\end{remark*} 

\subsection{A natural involution}\label{subsec.involution}
The stack $\sT$ comes equipped with an involution $\sigma$, which sends a
diagram of line bundles
\[
L_n \to L_{n-1} \to \dots \to L_1 \to \cO
\]
to the "dualized" diagram
\[
L_n \xrightarrow{s_1^\vee \otimes \id} L_1^\vee \otimes L_n \xrightarrow{} \dots
\xrightarrow{} L_{n-1}^\vee \otimes L_n \xrightarrow{s_n^\vee \otimes \id}
L_{n}^\vee \otimes L_n \cong \cO.
\]
This corresponds to the involution $\rev \colon \fC \to \fC$ of the index categeory $\fC$
which sends a finite ordered set to the same set with the opposite order, and
similarly for morphisms. There is an explicit description of $\sigma^*$ as follows.
\begin{proposition}\label{prop.involution}
The homomorphism of graded algebras $\sigma^*: \QSym \to \QSym$
is characterized in the monomial basis by
\[
\sigma^* M_J = M_{\rev(J)}.
\]
This is the unique nontrivial graded algebra automorphism of $\QSym$ which preserves
the monomial basis.
\end{proposition}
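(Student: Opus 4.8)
The statement has two parts, the formula $\sigma^* M_J = M_{\rev(J)}$ and the uniqueness of $\sigma^*$, and I would treat them separately.

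For the formula, the plan is to reduce to the limit presentation $\QSym = \varprojlim_\fC \bZ[\alpha_s]_{s\in S}$ of Proposition \ref{prop.qsym-limit} and to compute in each finite chart. The involution $\sigma$ is compatible, through the equivalence $\varinjlim_\fC \cA^J \xrightarrow{\sim} \sT$, with the reversal functor $\rev\colon \fC \to \fC$; unwinding the dualized diagram shows that $\sigma$ restricts on the atlas $\cA^{[n]}$ to the isomorphism sending the universal tuple $(\cL_i, s_i)_{i=1}^n$ to its reversal $(\cL_{n+1-i}, s_{n+1-i})_{i=1}^n$, the twists by the top line bundle cancelling in the successive quotients. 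Passing to $B\bG_m^n$ this reverses the $\bG_m$-factors, so the induced map on $\Chowbullet(\cA^n) = \bZ[\alpha_1,\dots,\alpha_n]$ is the coordinate reversal $\alpha_i \mapsto \alpha_{n+1-i}$, and hence $\pi_n \circ \sigma^* = \tau_n \circ \pi_n$ for every $n$, where $\tau_n$ denotes this reversal and $\pi_n$ the projection. A one-line reindexing of the defining sum for $M_J$ gives $\tau_n(\pi_n M_J) = \pi_n M_{\rev(J)}$, and since the $\pi_n$ jointly determine elements of $\QSym$, I conclude $\sigma^* M_J = M_{\rev(J)}$. In particular $\sigma^*$ is a nontrivial graded algebra automorphism preserving the monomial basis, so it is a valid candidate for the uniqueness claim.

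For uniqueness, let $\Phi$ be a graded algebra automorphism preserving the monomial basis, so $\Phi(M_I) = M_{\pi(I)}$ for a degree-preserving permutation $\pi$ of $\Comp$. Since $M_{(1)}$ is the only basis element in degree $1$, we have $\pi(1) = (1)$. Writing $M_{(1)}^n = \sum_{|I|=n}\binom{n}{I_1,\dots,I_\ell} M_I$ and using that the multinomial coefficient equals $1$ precisely for the single part $(n)$, invariance of $M_{(1)}^n$ under $\Phi$ forces $\pi$ to fix every $(n)$. Next, $M_{(a)} M_{(b)} = M_{(a,b)} + M_{(b,a)} + M_{(a+b)}$ together with $\pi(a+b)=(a+b)$ shows that $\pi$ preserves each pair $\{(a,b),(b,a)\}$, which I record by a sign $\epsilon_{a,b} \in \{\pm 1\}$.

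The crucial step is to show that all the $\epsilon_{a,b}$ coincide. Expanding $M_{(a,b)}\cdot M_{(c)}$ by Proposition \ref{prop.qsym.multiplication} and matching its two length-$2$ terms under $\Phi$ (whose values on length-$2$ compositions are already determined) yields, for a suitable auxiliary part $c$, the relations $\epsilon_{a+c,b} = \epsilon_{a,b} = \epsilon_{a,b+c}$; propagating these connects any two pairs and produces a single global sign $\epsilon$. Replacing $\Phi$ by $\sigma^* \circ \Phi$ when $\epsilon = -1$, I reduce to the case where $\pi$ fixes all compositions of length $\le 2$ and finish by induction on length. For the inductive step at $K = (K_1,\dots,K_\ell)$ with $\ell \ge 3$, I consider the products $M_{(K_1)}\cdot M_{(K_2,\dots,K_\ell)}$ and $M_{(K_1,\dots,K_{\ell-1})}\cdot M_{(K_\ell)}$: each factor has length $<\ell$ and is therefore fixed, so $\Phi$ fixes each product; the lower-length (merged) terms are fixed by the induction hypothesis, and injectivity of $\pi$ then forces $\pi$ to permute the length-$\ell$ (pure shuffle) terms of each product among themselves. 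Hence $\pi(K)$ lies in both shuffle sets, and for distinct parts their intersection is exactly $\{K\}$, giving $\pi(K)=K$; thus $\Phi = \id$, i.e. the original $\Phi$ equals $\id$ or $\sigma^*$.

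The main obstacle is purely combinatorial and has two sides: first, forcing the pairwise signs $\epsilon_{a,b}$ to agree globally via the propagation argument, where one must choose the auxiliary part $c$ so as to avoid the finitely many coincidences among the compositions occurring in $M_{(a,b)}\cdot M_{(c)}$; and second, treating compositions $K$ with repeated parts in the inductive step, where the intersection of the two shuffle sets is strictly larger than $\{K\}$, so that one instead reads off $K$ from the distinct multiplicities with which the length-$\ell$ terms appear in the products. Everything else is formal once the formula $\sigma^* M_J = M_{\rev(J)}$ and the reduction to a single global sign are in hand.
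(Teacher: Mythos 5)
Your argument for the formula $\sigma^* M_J = M_{\rev(J)}$ is correct and is essentially the paper's own proof: the paper likewise identifies the lift of $\sigma$ to the chart $\cA^{[d]}$ (phrased there as the identity map $\cA^{J}\to\cA^{\rev(J)}$ rather than as the coordinate reversal of $\cA^{[d]}$, which amounts to the same thing), then reindexes the defining sum of $M_J$ and uses injectivity of the pullback to the chart. For the uniqueness assertion the paper gives no proof at all --- it simply cites \cite{QuasiRigidity} --- so there your proposal genuinely goes beyond the paper, and most of it is sound: forcing $\pi((n))=(n)$ via the multinomial coefficients in $M_{(1)}^n$, the pairing of $\{(a,b),(b,a)\}$, and the propagation of the sign $\epsilon_{a,b}$ through $M_{(a,b)}\cdot M_{(c)}$ all check out, modulo the genericity bookkeeping you acknowledge.

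However, the inductive step has a genuine gap for compositions with repeated parts, and the fix you sketch via multiplicities does not close it. Take $K=(1,2,1,2)$. Your two products are $M_{(1)}\cdot M_{(2,1,2)}$, whose length-$4$ part is $M_{(1,2,1,2)}+2M_{(2,1,1,2)}+M_{(2,1,2,1)}$, and $M_{(1,2,1)}\cdot M_{(2)}$, whose length-$4$ part is $M_{(2,1,2,1)}+2M_{(1,2,2,1)}+M_{(1,2,1,2)}$. In both, the compositions $(1,2,1,2)$ and $(2,1,2,1)$ each occur with multiplicity $1$, so the transposition exchanging them preserves both multisets with multiplicities; neither the intersection of the supports nor the multiplicity data rules out $\pi((1,2,1,2))=(2,1,2,1)$. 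The claim is still true --- the length-$4$ part of $M_{(1,2)}\cdot M_{(1,2)}$ is $2M_{(1,2,1,2)}+4M_{(1,1,2,2)}$, which does force $\pi((1,2,1,2))=(1,2,1,2)$ --- but to complete the induction you must allow other factorizations of $K$ into lower-length monomials (or argue from the totality of such products), not only the two you chose.
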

\begin{proof}
It is enough to show that the restriction $\sigma \vert_{\sT^{\leq d}}$ induces 
the claimed automorphism for each $d$.
Let $J = [d]$, and consider the \'etale covers $p\colon \cA^J \to \sT^{\leq d}$ and
$\tilde{p} \colon \cA^{\rev(J)} \to \sT^{\leq d}$.
The involution $\sigma \vert_{\sT^{\leq d}}$ does not lift to a morphism
$\cA^J \to \cA^J$ over $ \sT^{\leq d}$,
but it lifts to the identity $\id\colon\cA^J \to \cA^{\rev(J)}$, which is not induced
by any morphism in $\fC$.
Let $J=(J_1, \dots, J_\ell)$ be a composition of length at most $d$.
The monomial $M_J \in \Chow (\sT^{\leq d})$
pulls back to 
\[
\sum_{i_1 < \dots < i_\ell} \alpha_{i_1}^{J_1} \cdots \alpha_{i_\ell}^{J_\ell}
\]
in $\Chow(\cA^J)$ respectively $\Chow(\cA^{\rev(J)})$, where the indices $i_j$
are taken from the same set $[n]$, but the order relation is inverted.
Hence we see that $\tilde{p}^* \sigma^* M_J = p^* M_{\rev (J)}$.
For the second part, see \cite{QuasiRigidity}.
\end{proof}
Note that there does not exist any nontrivial algebra automorphism of $\QSym$
which preserves the monomial basis \emph{and} respects the comultiplication.

\section{\texorpdfstring{Application to $\fM_{0,2}^{ss}\text{ and } \fM_{0,3}^{ss}$}{Application to M(0,2)-ss and M(0,3)-ss}}
\label{sn.applications}
We can now leverage our calculations and the interpretation of $\sT$ as a substack of
$\fM_{0,3}^{ss}$ to obtain some immediate corollaries for the Chow groups of 
$\fM_{0,n}^{ss}$ for $n \in \{2,3\}$.

Consider the stack $\fM_{0,3}^{ss}$ of semistable curves of genus $0$ with $3$ marked points.
Note that any such curve over $k$ has precisely one stable component.
We can define a morphism $\Psi: \sT \times \sT \times \sT \to \fM_{0,3}^{ss}$ as follows:
Given three curves $C_1,C_2,C_3$ with markings $(\sigma_1,\dots,\sigma_9)$
such that $2$ markings on each curve lie on the same component, glue the stable components
along the three special points (in such a way that one marked points in $C_1$ is glued to the node
in $C_2$, and the other marked points to the node in $C_3$,
and similarly for marked points on the other curves).
The result is a nodal semistable curve with $3$ marked points and precisely one stable component.
A triple of morphisms in $\sT$, i.e. Cartesian diagrams, is taken to the glued Cartesian diagram.
This is an isomorphism, because it is a representable \'etale morphism whose
geometric fibers consist of single points.
Theorem \ref{thm.main} then implies the following.
\begin{corollary}
We have $\Chow^\bullet (\fM_{0,3}^{ss}) \cong \QSym \otimes \QSym \otimes \QSym$.
\end{corollary}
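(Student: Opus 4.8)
The plan is to deduce the corollary from Theorem~\ref{thm.main} by transporting the computation across the isomorphism $\Psi\colon \sT \times \sT \times \sT \xrightarrow{\sim} \fM_{0,3}^{ss}$ just constructed. Once $\Psi$ is known to be an isomorphism of stacks, the corollary reduces to the purely formal assertion that $\Chowbullet(\sT \times \sT \times \sT) \cong \QSym \otimes \QSym \otimes \QSym$, which I would obtain by applying the Chow K\"unneth property of $\sT$ twice.

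First I would pin down that $\Psi$ is genuinely an isomorphism by describing its inverse on geometric points. Every semistable genus-$0$ curve with three markings has a \emph{unique} stable component, namely a $\bP^1$ carrying exactly three special points, and deleting this component together with those three special points breaks the curve canonically into three chains of projective lines, each of which is precisely an accordion classified by $\sT$. This canonical decomposition exhibits a bijection on isomorphism classes of geometric points. Combined with the fact that $\Psi$ preserves automorphism groups—the rigid central $\bP^1$ contributes nothing, so the glued configuration has automorphism group $\bG_m^{d_1} \times \bG_m^{d_2} \times \bG_m^{d_3}$, matching the triple of accordions—this shows $\Psi$ is stabilizer-preserving, hence representable; being moreover \'etale with singleton geometric fibers, it is an isomorphism. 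This is essentially the argument already asserted just before the corollary, so here it only needs to be invoked.

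It then remains to compute $\Chowbullet(\sT \times \sT \times \sT)$. By Theorem~\ref{thm.main} the stack $\sT$ has the Chow K\"unneth property and $\Chowbullet(\sT) \cong \QSym$. Since $\sT$ admits a good filtration, the product filtration $\{\sT^{\leq a} \times \sT^{\leq b}\}$ equips $\sT \times \sT$ with a good filtration as well, so by the remark following the definition of the Chow K\"unneth property the K\"unneth map stays an isomorphism even when the first factor only admits a good filtration. Applying this with first factor $\sT \times \sT$ and second factor $\sT$, and then once more to $\sT \times \sT$, gives
\[
\Chowbullet(\sT \times \sT \times \sT) \cong \Chowbullet(\sT \times \sT) \otimes \QSym \cong \QSym \otimes \QSym \otimes \QSym,
\]
and pulling back along $\Psi^*$ yields the stated isomorphism for $\fM_{0,3}^{ss}$. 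The only genuine point requiring care—and the step I expect to be the main obstacle—is verifying that the good-filtration hypothesis propagates through products so that the K\"unneth remark legitimately applies to the non-finite-type factor $\sT \times \sT$; once the product filtration is reindexed to a good filtration, the remainder is a routine double application of Theorem~\ref{thm.main}.
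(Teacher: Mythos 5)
Your proposal is correct and follows essentially the same route as the paper: the paper likewise constructs the gluing map $\Psi\colon \sT\times\sT\times\sT\to\fM_{0,3}^{ss}$, asserts it is an isomorphism because it is representable \'etale with singleton geometric fibers, and then cites Theorem~\ref{thm.main} (i.e.\ the Chow K\"unneth property of $\sT$, applied iteratively) to conclude. Your extra care about reindexing the product filtration on $\sT\times\sT$ into a good filtration so that the K\"unneth remark applies to the non-finite-type factor is a legitimate point that the paper leaves implicit, and it goes through as you describe.
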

There is a variant of the stack of expansions, called the stack of non-rigid
expansions, which can be shown to be isomorphic to $\fM_{0,2}^{ss}$.
The two variants are connected by a rather simple relationship: moving
one of the marked points on the stable component of $\sT$, viewed as 
a stack of semistable curves, defines a $\bG_m$-action, and $\fM_{0,2}^{ss}$
is the quotient by this action. More precisely, there is the following result:
\begin{proposition}[{\cite[Prop. 3.3.4]{ExpDegPairs}}]\label{prop.m02-isom}
There is a canonical (up to a twist by inversion) isomorphism
$\fM_{0,2}^{ss} \cong B\bG_m \times \sT$,
exhibiting $\sT$ as the rigidification of $\fM_{0,2}^{ss}$ by the
normal subgroup $\bG_m$ of its inertia stack.
\end{proposition}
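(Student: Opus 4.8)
The plan is to identify $\fM_{0,2}^{ss}$ with a non-rigid version of the line-bundle model of $\sT$, in which the $\bG_m$ to be rigidified is literally the automorphism group of one line bundle, and then to split the resulting gerbe by a tautological weight-one line bundle.

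First I would introduce the \emph{non-rigid} stack $\sT^{\mathrm{nr}}$, defined exactly as in Definition \ref{def.T-def} but dropping condition (2) (so $L(0)$ is an arbitrary line bundle) and no longer requiring morphisms to be the identity on $L(0)$. Simultaneously scaling an entire diagram $L$ by $t \in \bG_m$ commutes with every transition map, hence is an automorphism; this is functorial in families and, since the automorphism group at an accordion of length $d$ is the abelian group $\bG_m^{d+1}$, it is central. This defines a flat normal subgroup $H \cong \bG_m$ of the inertia $I_{\sT^{\mathrm{nr}}}$. The twisting functor $(E,L) \mapsto (E, L \otimes L(0)^\vee)$ takes values in $\sT$, is $H$-invariant (the scaling cancels), and on automorphisms is the quotient $\bG_m^{d+1} \to \bG_m^{d}$ by $H$; I would check on the charts $\cA^J$ that it exhibits $\sT$ as the rigidification of $\sT^{\mathrm{nr}}$ by $H$, so that $\sT^{\mathrm{nr}} \to \sT$ is an $H$-gerbe.

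Next I would identify $\sT^{\mathrm{nr}} \cong \fM_{0,2}^{ss}$ by running the universal-curve construction preceding \eqref{eq.injlimT} with markings only at the two ends of the chain: a configuration of line bundles over $\cA^J$ produces a chain of projective lines glued according to the transition sections and carrying the two end markings, giving \'etale charts $\cA^J \to \fM_{0,2}^{ss}$ and an equivalence with the corresponding colimit, just as for $\sT$. Comparing stabilizers shows that the extra factor $H$ is carried to the subgroup of $I_{\fM_{0,2}^{ss}}$ scaling the component that carries $\sigma_2$ --- equivalently, acting with weight one on the tangent line at $\sigma_2$ --- which is the asserted normal $\bG_m$; under this dictionary $\sT$ becomes the rigidification of $\fM_{0,2}^{ss}$. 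This moduli comparison is the step I would ultimately attribute to \cite{ExpDegPairs}.

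It then remains to split the gerbe, and here the line-bundle model pays off: the tautological line bundle $L(0)$ on $\sT^{\mathrm{nr}}$ has $H$-weight one, so it defines a morphism $\sT^{\mathrm{nr}} \to B\bG_m$ that restricts to the band isomorphism on $H$. Together with the rigidification $\sT^{\mathrm{nr}} \to \sT$ this yields $\sT^{\mathrm{nr}} \to B\bG_m \times \sT$, with inverse $((E, L_0), N) \mapsto (E, L_0 \otimes N)$, giving the isomorphism $\fM_{0,2}^{ss} \cong B\bG_m \times \sT$; replacing $L(0)$ by its dual composes the $B\bG_m$-factor with the inversion automorphism, accounting for the ``twist by inversion.'' The main obstacle is the moduli comparison of the previous paragraph --- producing the universal family of $2$-pointed semistable curves from a configuration of line bundles, verifying that the functor is an equivalence and matching automorphisms and charts, together with confirming that $H$ is a flat normal subgroup of the full inertia over an arbitrary base so that rigidification is legitimate. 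Once the weight-one line bundle $L(0)$ is available, the gerbe-splitting is formal.
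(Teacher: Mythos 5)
Your proposal matches the paper's treatment: the paper also identifies $\fM_{0,2}^{ss}$ with the non-rigid stack $\sT^{\mathrm{nr}}$ obtained by dropping condition (2) of Definition \ref{def.T-def}, and splits it via exactly the map $(L_n \to \dots \to L_0) \mapsto (L_0,\ L_n \otimes L_0^\vee \to \dots \to L_1 \otimes L_0^\vee \to \cO)$, with the underlying moduli comparison attributed to \cite{ExpDegPairs} just as you do. Your write-up simply makes explicit the rigidification and gerbe-splitting steps that the paper leaves implicit.
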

\begin{corollary}
We have $\Chow^\bullet (\fM_{0,2}^{ss}) \cong \QSym[\beta]$,
where $\beta$ is the pullback of a generator under the morphism
$\fM_{0,2}^{ss} \to B\bG_m$ corresponding to the $\bG_m$-action on $\sT$.
\end{corollary}

In fact, we can find a very explicit description of the isomorphism in Proposition
\ref{prop.m02-isom} as follows: the stack $\fM_{0,2}^{ss}$ is isomorphic to the 
stack $\sT^\mathrm{nr}$, whose definition is the same as Definition \ref{def.T-def},
except that we do not require condition (ii).\footnote{The symbol "nr" stands for "non-rigid".}

Then the isomorphism $\Phi: \sT^\mathrm{nr} \to \bG_m \times \sT$ takes 
a diagram
\[
L_n \to \dots \to L_1 \to L_0
\]
to the pair
\[
(L_0, L_n \otimes L_0^\vee \to \dots \to L_1 \otimes L_0^\vee \to \cO).
\]

There is a natural involution $\tau:\fM_{0,2}^{ss} \to \fM_{0,2}^{ss}$ given by exchanging
the two marked points. In the interpretation $\fM_{0,2}^{ss} \cong \sT^\mathrm{nr}$,
this corresponds to dualizing:
\[
(L_n \to \dots \to L_0) \mapsto (L_0^\vee \to \dots \to L_n^\vee).
\]
\begin{proposition}\label{prop.s2-action}
The action of $\tau$ on the Chow ring $\QSym[\beta]$ is given by mapping a
quasisymmetric function $g$ to $\sigma^* (g)$ for $\sigma$ as in \ref{subsec.involution},
and by mapping
\[
\beta \mapsto -\beta + M_{(1)} = -\beta + \sum_i \alpha_i.
\] 
\end{proposition}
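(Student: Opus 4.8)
The plan is to compute the action of $\tau$ on generators of the Chow ring $\QSym[\beta]$ by pulling back to an \'etale cover $\cA^J$, where everything is explicit in terms of the line bundles $(\cL_i, s_i)$. Under the isomorphism $\Phi \colon \sT^{\mathrm{nr}} \to \bG_m \times \sT$ of the preceding discussion, a non-rigid diagram $L_n \to \dots \to L_0$ corresponds to the pair $(L_0, L_n \otimes L_0^\vee \to \dots \to L_1 \otimes L_0^\vee \to \cO)$; the first factor $L_0 \in B\bG_m$ contributes the generator $\beta$ (as the first Chern class of the universal line bundle $L_0$, up to the sign coming from the "twist by inversion" in Proposition \ref{prop.m02-isom}), and the second factor contributes $\QSym$. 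The involution $\tau$ dualizes, sending $L_n \to \dots \to L_0$ to $L_0^\vee \to \dots \to L_n^\vee$.

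First I would chase through the definition to see what the dualized diagram looks like after applying $\Phi$. The bottom term of the dualized diagram is $L_n^\vee$, so the new "$L_0$" is $L_n^\vee$, and the rigidified $\QSym$-part becomes
\[
L_0^\vee \otimes L_n \to \dots \to L_{n-1}^\vee \otimes L_n \to \cO,
\]
which is precisely the reversal of the original rigidified diagram. This identifies the $\QSym$-part of $\tau$ with the reversal involution $\sigma$ from Section \ref{subsec.involution}, so by Proposition \ref{prop.involution} we get $\tau^*(g) = \sigma^*(g)$ for $g \in \QSym$, namely $\sigma^* M_J = M_{\rev(J)}$. That handles the first assertion.

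For the action on $\beta$, the key point is to track the new $\beta$, which is $c_1(L_n^\vee) = -c_1(L_n)$, in terms of the old data. Working on the cover $\cA^J$ with $|J| = n$ and using the notation $L_i = \cL_1 \otimes \dots \otimes \cL_i$ from the definition of the diagram of line bundles on $\cA^J$, I would express $c_1(L_n) = \sum_{i=1}^n c_1(\cL_i)$ in terms of the variables $\alpha_i$ (the classes pulling back from $B\bG_m^n$) and the class $\beta = c_1(L_0)$. The telescoping relation between the top and bottom of the diagram means $c_1(L_n) = c_1(L_0) - \bigl(\text{sum of the relative first Chern classes}\bigr)$, and I expect the sum of relative classes to pull back to $M_{(1)} = \sum_i \alpha_i$ restricted to the cover. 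Combining, $\tau^* \beta = -c_1(L_n) = -\beta + M_{(1)}$.

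The main obstacle will be getting the signs and the twist-by-inversion bookkeeping exactly right, since both $\Phi$ and the isomorphism of Proposition \ref{prop.m02-isom} are only canonical up to inversion, and $\beta$ itself is defined only up to sign as a pullback from $B\bG_m$. Concretely, I would fix the sign convention for $\beta$ once and for all (say $\beta = c_1(L_0)$ on $\sT^{\mathrm{nr}}$), verify that under this convention the $\QSym$-generator $M_{(1)}$ indeed equals $\sum_i \alpha_i$ on the cover, and then confirm that the telescoping identity produces exactly $-\beta + M_{(1)}$ rather than a variant with the wrong sign on $M_{(1)}$. Since the formula must be consistent with $\tau$ being an involution, a useful sanity check is that $(\tau^*)^2 \beta = \tau^*(-\beta + M_{(1)}) = -(-\beta + M_{(1)}) + \sigma^* M_{(1)} = \beta - M_{(1)} + M_{(1)} = \beta$, using $\sigma^* M_{(1)} = M_{\rev((1))} = M_{(1)}$; this forces the stated coefficients and confirms the formula is internally consistent.
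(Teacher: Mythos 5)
Your proposal is correct and follows essentially the same route as the paper: reduce the first assertion to the observation that $\tau$ acts on the rigidified factor as the reversal $\sigma$, then compute $\tau^*\beta = c_1(L_n^\vee)$ by applying $\Phi$ to the dualized diagram and rewriting $L_n^\vee$ as $L_0^\vee$ twisted by the top term of the rigidified diagram. The only (cosmetic) difference is that the paper uses that $\tau^*$ preserves degree to do the computation on $B\bG_m\times\sT^{\leq 1}$ with a single line bundle, whereas you telescope the whole diagram on $\cA^J$; also note that your final involution check only forces the coefficient of $\beta$ to be $-1$, not the coefficient of $M_{(1)}$, so it is a consistency check rather than a determination of the formula.
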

\begin{proof}
First observe that $\tau$ restricts to $\sigma$ on the second factor, so we only need
to compute $\tau^* \beta$. Since $\tau^*$ is an automorphism of graded rings, it preserves
the grading of $\tau$, so it is enough to compute it on
$\bG_m \times \sT^{\leq 1} \cong \bG_m \times \cA^1,$
where we can see that the dual diagram $(L_0^\vee \to L_1^\vee)$
is sent by $\Phi$ to the pair
\[
(L_1^\vee, L_0^\vee \otimes L_1 \to \cO) =
(L_0^\vee \otimes (L_0^\vee \otimes L_1)^\vee, L_0^\vee \otimes L_1 \to \cO).
\]
We conclude by remarking that $c_1((L_0^\vee \otimes L_1)^\vee) = M_{(1)}$.
\end{proof}

\bibliographystyle{halpha}
\bibliography{aligned}

\end{document}